\newtheorem{theorem}{Theorem}[section]
\newtheorem{definition}[theorem]{Definition}
\newtheorem{example}[theorem]{Example}
\newtheorem{corollary}[theorem]{Corollary}
\newtheorem{remark}[theorem]{Remark}
\newtheorem{proposition}[theorem]{Proposition}
{

\begin{document}
\title{Best proximity pair results for relatively nonexpansive mappings in geodesic spaces}
\author{Aurora Fern\'andez-Le\'on$^{1}$, Adriana Nicolae$^{2,3}$}
\date{}
\maketitle

\begin{center}
{\footnotesize
$^{1}$Dpto. de An\'alisis Matem\'atico, Universidad de Sevilla, P.O. Box 1160, 41080-Sevilla, Spain\\
${}^2$ Department of Mathematics, Babe\c{s}-Bolyai University, Kog\u{a}lniceanu 1, 400084 Cluj-Napoca, Romania\\
${}^3$ Simion Stoilow Institute of Mathematics of the Romanian Academy, Research group of the project PD-3-0152, P.O. Box 1-764, RO-014700 Bucharest, Romania\\
E-mail: aurorafl@us.es, anicolae@math.ubbcluj.ro
}
\end{center}

\begin{abstract}
Given $A$ and $B$ two nonempty subsets in a metric space, a mapping $T : A \cup B \rightarrow A \cup B$ is relatively nonexpansive if $d(Tx,Ty) \leq d(x,y) \text{ for every } x\in A, y\in B.$ A best proximity point for such a mapping is a point $x \in A \cup B$ such that $d(x,Tx)=\text{dist}(A,B)$. In this work, we extend the results given in [A.A. Eldred, W.A. Kirk, P. Veeramani, Proximal normal structure and relatively nonexpansive mappings, Studia Math. 171, 283--293 (2005)] for relatively nonexpansive mappings in Banach spaces to more general metric spaces. Namely, we give existence results of best proximity points for cyclic and noncyclic relatively nonexpansive mappings in the context of Busemann convex reflexive metric spaces. Moreover, particular results are proved in the setting of CAT($0$) and uniformly convex geodesic spaces. Finally, we show that proximal normal structure is a sufficient but not necessary condition for the existence in $A \times B$ of a pair of best proximity points.\\

\vskip.27cm

\noindent{\sl MSC:} Primary 54E40, 47H10.

\vskip.25cm

\noindent{\sl Keywords:} Relatively nonexpansive mapping, best proximity pair, best proximity point,
proximal normal structure, Busemann convexity.

\end{abstract}

\section{Introduction}
Although metric fixed point theory is primary concerned with the existence of fixed points of mappings that satisfy certain restrictions, there exist many other related problems that have attracted a high amount of interest from researchers in the area. One of such problems consists in studying the existence of approximate solutions of the equation $x=Tx$ in the absence of fixed points of the mapping $T$. A point $x\in X$ is said to be an approximate solution of the equation $x=Tx$ if $x$ is ``close'' to $Tx$ is some sense. Depending on the considered closeness condition between $x$ and $Tx$, results of different nature have been obtained in the literature. One classical result in this direction due to Ky Fan \cite{fan} states that if $A$ is a compact, convex and nonempty subset of a  locally convex Hausdorff topological vector space $X$ and $T$ is a continuous mapping from $A$ to $X$, then there exists a point $x\in A$ such that $d(x,Tx)=d(Tx,A)$, where $d$ is the semi-metric induced by a continuous semi-norm defined on $X$. If, instead of considering the condition $d(x,Tx)=d(Tx,A)$, one requires that $x$ is an absolute optimal approximate solution, that is, $d(x,Tx)=\text{dist}(A,B)$ either for non-self mappings $T : A \rightarrow B$ or for mappings $T : A \cup B \rightarrow A \cup B$ such that $T(A) \subseteq B$, $T(B) \subseteq A$ or $T(A) \subseteq A$, $T(B) \subseteq B$, existence, uniqueness and convergence results for such points are known as best proximity point theorems. Note that the notion of best proximity point also refers to such a type of approximate solution. In the present work we mainly focus on the study of best proximity points for certain self-mappings $T : A \cup B \rightarrow A \cup B$ satisfying the above inclusion relations. The first results concerning such mappings were given by Kirk, Srinivasan and Veeramani \cite{kive} in 2003. More precisely, it was proved that if $A$ and $B$ are two nonempty and closed subsets of a complete metric space, $T : A \cup B \rightarrow A \cup B$ is such that $T(A) \subseteq B$, $T(B) \subseteq A$ and there exists $k\in (0,1)$ such that
$$d(Tx,Ty) \leq k d(x,y) \text{ for every } x\in A, y\in B,$$ then $A\cap B$ contains a fixed point of $T$.

In the last years, many generalizations of this problem have appeared under the assumption $A\cap B = \emptyset$. In this respect, weaker metric conditions have been considered for the mapping $T$. This is, for instance, the case of cyclic contractions \cite{elve,suzu,esfe2}, cyclic Meir-Keeler contractions \cite{dibar,piakeeler} or relatively nonexpansive mappings \cite{elvek,espi,sankar}. Relatively nonexpansive mappings were introduced by Eldred, Kirk and Veeramani \cite{elvek} in the following way: a self-mapping $T : A \cup B \rightarrow A \cup B$ is relatively nonexpansive if
$$d(Tx,Ty) \leq  d(x,y) \text{ for every } x\in A, y\in B.$$
If, in addition, $T(A) \subseteq B$ and $T(B) \subseteq A$ then $T$ is said to be cyclic. Likewise, if $T(A) \subseteq A$ and $T(B) \subseteq B$, then $T$ is called noncyclic.

In \cite{elvek}, several best proximity point results were given in Banach spaces for cyclic and noncyclic relatively nonexpansive mappings. While many generalizations of best proximity point results from the linear setting to metric spaces have appeared in the literature (see, among others, the works \cite{esfe2,fer} on cyclic contractions in metric spaces), no result has been given for relatively nonexpansive mappings in a nonlinear setting. Here we address this problem extending results proved in \cite{elvek} in the context of Banach spaces to Busemann convex reflexive metric spaces. We also give more particular results in the setting of CAT($0$) and uniformly convex geodesic spaces. Furthermore, we prove an analogue of a result due to Karlovitz \cite{kar} showing that proximal normal structure is a sufficient but not necessary condition for the existence in $A \times B$ of a pair of best proximity points.

\section{Preliminaries}

In this section we compile the main concepts and results we will
work with along this paper. We begin with some basic definitions
and notations that are needed. Let $(X,d)$ be a metric space and consider
$A$ and $B$ two subsets of $X$. Define
\begin{align*}
d(x,A)= & \inf \{d(x,y) : y \in A\};\\
\text{dist}(A,B)= &\inf \{d(x,y) : x\in A,y\in B\};\\
\delta(x,A)= &\sup \{d(x,y) : y \in A\};\\
\delta(A,B)= & \sup \{d(x,y) : x \in A, y \in B\}.
\end{align*}
From now on, $B (a,r)$ denotes the closed ball in the space $X$
centered at $a\in X$ with radius $r>0$.

The \emph{metric projection} $P_A$ onto $A$ is the mapping
$$P_A (x) = \{z\in A : d(x,z) = \text{dist}(x,A)\} \text{ for every } x \in X.$$
When this mapping is well-defined and singlevalued we use the same notation $P_A(x)$ to denote
the unique point belonging to this set.

In the sequel, we say that a pair of sets $(A,B)$ has a property if each of the sets $A$ and $B$ has this property. For instance, we say that the pair $(A,B)$ is closed and bounded if $A$ and $B$ are both closed and bounded. A very important property in this paper for a pair of sets is the one of proximity.

\begin{definition} A pair $(A,B)$ of subsets of a metric space is said
to be proximal if for each $(a,b) \in A\times B$ there exists $(a^\prime, b^\prime) \in A\times B$
such that $d(a,b^\prime)=d(a^\prime,b)=\text{dist}(A,B)$.
\end{definition}

In this context, given a pair of sets $(A,B)$ in a metric space, we say that the point $p \in A$ is a proximal point of $q \in B$ (with respect to $A$ and $B$) if $d(p,q)=\mbox{dist}(A,B)$. Then, $(p,q)$ is also called pair of proximal points.

In this paper we will
mainly work with geodesic spaces. A metric space
$(X,d)$ is said to be a {\it (uniquely) geodesic space} if every two points
$x$ and $y$ of $X$ are joined by a {\it (unique) geodesic}, i.e, a map
$c:[0,l]\subseteq {\mathbb R}\to X$ such that $c(0)=x$, $c(l)=y$,
and $d(c(t),c(t^{\prime}))=|t-t^{\prime}|$ for all $t,t^{\prime}
\in [0,l]$. The image $c([0,l])$ of such a geodesic forms a {\it geodesic segment} which joins $x$ and $y$ and it is not necessarily unique. If no confusion arises, we use $[x,y]$ to denote a geodesic segment
joining $x$ and $y$. A point $z$ in $X$ belongs
to a geodesic segment $[x,y]$ if and only if there exists $t\in [0,1]$ such that $d(x,z)=td(x,y)$ and $d(y,z)=(1-t)d(x,y)$ and we write $z=(1-t)x+ty$ for simplicity. Notice that this point may not be unique. When $t=\frac{1}{2}$, we often use the notation $\frac{x+y}{2}$ to denote $\frac{1}{2}x+\frac{1}{2}y$. Any Banach space is a
geodesic space with usual segments as geodesic segments. A subset $A$ of a geodesic space $X$ is said to be
{\it convex} if any geodesic segment that joins each pair of points
$x$ and $y$ of $A$ is contained in $A$. A {\it geodesic triangle} $\triangle(x,y,z)$ in $X$ consists of three points $x,y,z \in X$ (the \emph{vertices} of $\triangle$) and three geodesic segments joining each pair of vertices (the {\it edges} of $\triangle$). For more about geodesic
spaces the reader can check \cite{brha,bubu,papa}.

A \emph{metric} $d \colon X \times X \rightarrow {\mathbb R}$ is said to be \emph{convex} if for any $x, y, z \in X$ one has
$$d(x, (1 - t)y + tz) \leq (1 - t)d(x, y) + td(x, z) \text{ for all } t \in [0, 1].$$

A geodesic space $(X,d)$ is {\it Busemann convex} (introduced in \cite{bu}) if given any pair of geodesics $c_1 : [0, l_1] \to X$ and $c_2 : [0,l_2] \to X$ one has
$$
d(c_1(tl_1),c_2(tl_2)) \le (1-t)d(c_1(0),c_2(0)) + td(c_1(l_1),c_2(l_2)) \mbox{ for all } t \in [0,1].
$$
It is well-known that Busemann convex spaces are uniquely geodesic and with convex metric.

Given two geodesic segments $[x,z]$ and $[y,w]$ in a uniquely geodesic space $(X,d)$, we say that $[x,z]$ is \emph{parallel} to $[y,w]$, and we denote it by $[x,z] \| [y,w]$, if $d(x,y)=d(m_1,m_2)=d(z,w)$, where $m_1$ and $m_2$ are the midpoints of $[x,z]$ and $[y,w]$, respectively (that is, $m_1=\frac{x+z}{2}$ and $m_2=\frac{y+w}{2}$). The following property was given by Busemann in \cite{busem}. For the convenience of the reader we include a proof of this fact.
\begin{proposition}\label{para}
Let $x,y,z,w$ be four points in a Busemann convex geodesic space. Suppose $[x,z] \|[y,w]$. Then, $[x,y] \| [z,w]$.
\end{proposition}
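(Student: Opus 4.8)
The plan is to show that the four points $x,y,w,z$, taken in this cyclic order, have the property that opposite sides are parallel, i.e.\ to verify the three equalities $d(x,y)=d(z,w)$ (already in hand, since $[x,z]\|[y,w]$), and then $d(x,z)$-midpoint $=$ $d(y,w)$-midpoint in the transposed configuration, which after renaming amounts to checking that the midpoints $n_1$ of $[x,y]$ and $n_2$ of $[z,w]$ satisfy $d(n_1,n_2)=d(x,z)=d(y,w)$. The hypothesis $[x,z]\|[y,w]$ gives us $d(x,y)=d(m_1,m_2)=d(z,w)$ where $m_1=\frac{x+z}{2}$, $m_2=\frac{y+w}{2}$; note $d(x,z)=d(y,w)$ need not hold, so the conclusion $[x,y]\|[z,w]$ must be read with the midpoints of $[x,y]$ and $[z,w]$, and the required equality is $d(x,z)=d\!\left(\frac{x+y}{2},\frac{z+w}{2}\right)=d(y,w)$. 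Wait — I must be careful: the definition makes $[x,z]\|[y,w]$ symmetric in the roles, and the conclusion $[x,y]\|[z,w]$ requires $d(x,z)=d(m_1',m_2')=d(y,w)$ with $m_1'=\frac{x+y}{2}$, $m_2'=\frac{z+w}{2}$. So I do get $d(x,z)=d(y,w)$ as part of what must be proved, and the real content is the midpoint equality.

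First I would set $m_1=\frac{x+z}{2}$ and $m_2=\frac{y+w}{2}$ and, using Busemann convexity applied to the two geodesics $[x,y]$ and $[z,w]$ run at the parameter corresponding to the midpoints, observe that the midpoint $p$ of $[m_1,m_2]$ lies on both the segment joining the midpoints — more usefully, I would invoke the standard fact that in a Busemann convex space the midpoint map is ``affine'' in the sense that the midpoint of $\left[\frac{x+z}{2},\frac{y+w}{2}\right]$ coincides with the midpoint of $\left[\frac{x+y}{2},\frac{z+w}{2}\right]$; both equal the point one gets by bisecting twice, and Busemann convexity forces the relevant quadrilateral-midpoint point to be unique. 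Concretely: let $u=\frac{x+y}{2}$, $v=\frac{z+w}{2}$, and let $q=\frac{u+v}{2}$. By Busemann convexity (convexity of $d$) applied along $[u,v]$, $[x,z]$ and $[y,w]$, one has $d(q,m_1)\le\frac12 d(u,x)+\frac12 d(v,z)\cdot$(appropriate bookkeeping), and symmetrically $d(q,m_2)$ is controlled; combined with the triangle inequality and the hypothesis $d(m_1,m_2)=d(x,y)$, these inequalities are forced to be equalities, which pins $q$ to be the midpoint of $[m_1,m_2]$ as well.

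The key steps, in order, are: (1) introduce the two midpoints $m_1,m_2$ of the parallel sides and the two midpoints $u,v$ of $[x,y]$ and $[z,w]$; (2) use convexity of the metric (a consequence of Busemann convexity, as stated in the excerpt) to bound $d(u,v)$ above by $\frac12 d(x,z)+\frac12 d(y,w)$ and also by a quantity built from $d(m_1,m_2)$; (3) use convexity again, now along the segments $[x,z]$ and $[y,w]$, to bound $d(m_1,m_2)$ from above by an average of $d(x,y)$ and $d(z,w)$, which by hypothesis equals $d(x,y)=d(z,w)$, so that the bounding inequalities must all be equalities; (4) extract from the equality case that $u$, $v$, and the relevant midpoints are positioned so that $d(x,z)=d(u,v)=d(y,w)$, which is exactly $[x,y]\|[z,w]$. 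The main obstacle I anticipate is step (3)–(4): Busemann convexity is only an inequality, so recovering the precise metric equalities for the midpoints requires a careful ``equality in the triangle inequality / convexity forces collinearity of midpoints'' argument, most cleanly done by iterating the midpoint construction and appealing to uniqueness of geodesics to identify the repeatedly-bisected point from both sides; getting the bookkeeping of the convexity coefficients right (so that the two chains of inequalities genuinely pinch) is where the care is needed.
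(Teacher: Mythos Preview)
Your plan correctly identifies what has to be shown, namely $d(x,z)=d\bigl(\tfrac{x+y}{2},\tfrac{z+w}{2}\bigr)=d(y,w)$, and you see that the work lies in forcing $d(x,z)=d(y,w)$. But steps (2)--(4) do not actually pinch anything. Step~(3) only recovers the hypothesis: the inequality $d(m_1,m_2)\le\tfrac12 d(x,y)+\tfrac12 d(z,w)=r$ is already an equality by assumption, and equality in \emph{this} Busemann estimate gives no direct information about $d(x,z)$ versus $d(y,w)$. Your bound $d(u,v)\le\tfrac12\bigl(d(x,z)+d(y,w)\bigr)$ in step~(2) is fine but is only an upper bound; you never produce a matching lower bound, so there is no pinching that would force $d(x,z)=d(y,w)$. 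The attempt to show that $q=\tfrac{u+v}{2}$ equals $\tfrac{m_1+m_2}{2}$ by bounding $d(q,m_1)$ and $d(q,m_2)$ separately runs into the same problem: the natural Busemann bounds involve the diagonals $d(y,z)$ and $d(x,w)$, which are not controlled by the hypothesis.

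The paper's proof supplies the two missing ideas. First, it shows that the midpoints of the \emph{diagonals} coincide: one proves $d\bigl(m_1,\tfrac{y+z}{2}\bigr)\le r/2$ and $d\bigl(m_2,\tfrac{y+z}{2}\bigr)\le r/2$ by Busemann convexity, and since $d(m_1,m_2)=r$ the triangle inequality forces both to be equalities, hence $\tfrac{y+z}{2}=\tfrac{m_1+m_2}{2}$; the same argument gives $\tfrac{x+w}{2}=\tfrac{m_1+m_2}{2}$. This diagonal coincidence is what lets one compare $d(m_3,m_4)$ to both $d(x,z)$ and $d(y,w)$. Second --- and this is the step your sketch only gestures at --- one does not extract $d(x,z)=d(y,w)$ from a single equality case. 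Instead, assuming $d(x,z)<d(y,w)$, the paper observes that $[m_3,m_4]\|[y,w]$ with $d(m_3,m_4)\le d(x,z)$, and then \emph{iterates}: set $p_1=\tfrac{m_3+y}{2}$, $s_1=\tfrac{m_4+w}{2}$, get $d(p_1,s_1)\le d(x,z)$, and repeat to produce $p_n\to y$, $s_n\to w$ with $d(p_n,s_n)\le d(x,z)<d(y,w)$, contradicting continuity of the metric. Your final parenthetical about ``iterating the midpoint construction'' is the right instinct, but it is the heart of the proof, not a bookkeeping detail; without the diagonal-midpoint coincidence to seed it and without the specific bisection toward $y$ and $w$, the iteration has nothing to bite on.
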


\begin{proof} Let $m_1=\frac{x+z}{2}$, $m_2=\frac{y+w}{2}$, $m_3=\frac{x+y}{2}$ and $m_4=\frac{z+w}{2}$. Let $r=d(x,y)=d(z,w)=d(m_1,m_2)$. By using the Busemann convexity of the space we have that $d(m_1,\frac{y+z}{2})\leq r/2$, $d(m_1,\frac{w+x}{2})\leq r/2$, $d(m_2,\frac{y+z}{2})\leq r/2$ and $d(m_2,\frac{w+x}{2})\leq r/2$, which implies
$$r = d(m_1,m_2) \leq d\bigg{(}m_1,\frac{y+z}{2}\bigg{)}+ d\bigg{(}\frac{y+z}{2},m_2\bigg{)}\leq \frac{r}{2}+\frac{r}{2}=r$$ and
$$r = d(m_1,m_2) \leq d\bigg{(}m_1,\frac{w+x}{2}\bigg{)}+ d\bigg{(}\frac{w+x}{2},m_2\bigg{)}\leq \frac{r}{2}+\frac{r}{2}=r.$$
Consequently, $\frac{y+z}{2}=\frac{m_1+m_2}{2}=\frac{w+x}{2}.$ Let $m_5=\frac{m_1+m_2}{2}$. Again by the Busemann convexity,
$d(m_4,m_5)\leq 1/2 d(x,z)$ and $d(m_4,m_5)\leq 1/2 d(w,y)$, which implies $d(m_4,m_5) \leq 1/2 \min\{d(x,z),$ $d(w,y)\}$. Since $m_5$ is also the midpoint between $m_3$ and $m_4$, $d(m_3,m_4) \leq \min \{d(x,z),d(w,y)\}$. Suppose now that $d(x,z) < d(y,w)$. Then $d(m_3,m_4) \leq d(x,z)<d(y,w)$. Since $[m_3,m_4] \| [y,w]$, we can proceed similarly to get $p_1=\frac{m_3+y}{2}$ and $s_1=\frac{m_4+w}{2}$ such that $d(p_1,s_1)\leq d(x,z) <d(y,w)$. By repeating the process, we obtain the sequences $\{p_n\},\{s_n\} \subseteq X$, where, for $n\geq 2$, $p_n=\frac{y+p_{n-1}}{2}$ and $s_n=\frac{w+s_{n-1}}{2}$, with $d(s_n,w)=d(p_n,y)=r/2^{n+1}$ and $d(p_n,s_n)\leq d(x,z)<d(y,w).$

Since $d(y,w)\leq d(y,p_n)+d(p_n,s_n)+d(s_n,w)$ for every $n\in \mathbb{N}$, we may take superior limit in the previous inequality to get $d(y,w)\leq d(x,z) < d(y,w)$, which is a contradiction and the result follows. \end{proof}

In the sequel, we will also need the notion of uniformly convex geodesic space (see also \cite[pg. 107]{gore}).
A geodesic metric space $(X,d)$ is said to be {\it uniformly
convex} if for any $r>0$ and any $\varepsilon \in
(0,2]$ there exists $\delta \in (0,1]$ such that for all $a, x,
y\in X$ with $d(x,a)\le r$, $d(y,a)\le r$ and $d(x,y)\geq
\varepsilon r$,
$$
d(m,a)\le (1-\delta)r,
$$
where $m$ stands for a midpoint of $x$ and $y$. A mapping
$\delta:(0,+\infty)\times (0,2]\to (0,1]$ providing such a
$\delta=\delta (r,\varepsilon )$ for a given $r>0$ and
$\varepsilon \in (0,2]$ is called a {\it modulus of uniform
convexity} of $X$. If $\delta$ decreases with $r$ (for each fixed
$\varepsilon$) we say that $\delta$ is a monotone modulus of
uniform convexity of $X$ (introduced in \cite{leus}). If $\delta$ is lower semicontinuous from the right with respect to $r$ (for each fixed
$\varepsilon$), then we say $\delta$ is a lower semicontinuous from the right modulus of uniform convexity of $X$.

If in the above definition we drop the uniformity conditions then
we find the notion of {\it strict convexity} in metric spaces. Consequently, every uniformly convex geodesic space is strictly convex. Moreover,
it is easy to see that every Busemann convex metric space is strictly convex \cite{bora} and that strictly convex metric spaces are
uniquely geodesic.

A very important class of geodesic spaces are CAT($0$) spaces, that is, metric spaces of
nonpositive curvature in the sense of Gromov. These spaces play an essential role in several areas of mathematics \cite{brha} and find
applications in other branches of science such as biology and computer science \cite{bart,ow}. CAT($0$) spaces are defined in terms of comparison with ${\mathbb E}^2$, the Euclidean plane, as follows: given $(X,d)$ a geodesic metric space, a {\it comparison triangle}
for a geodesic triangle $\triangle (x_1,x_2,x_3)$ in $(X,d)$ is a
triangle $\triangle(\bar{x}_1,\bar{x}_2,\bar{x}_3)$ in ${\mathbb E}^2$ such
that $d_{{\mathbb E}^2}(\bar{x}_i,\bar{x}_j)=d(x_i,x_j)$ for $i,j\in\{
1,2,3\}$. Such a comparison triangle always exists
in ${\mathbb E}^2$ and is unique up to isometry. A geodesic triangle $\triangle$ in $X$ is said to satisfy the {\it CAT(0) inequality} if, given $\bar{\triangle}$ a comparison
triangle in ${\mathbb E}^2$ for $\triangle$, for all $x,y\in \triangle$
$$
d(x,y)\le d_{{\mathbb E}^2}(\bar{x},\bar{y}),
$$
where $\bar{x},\bar{y}\in\bar{\triangle}$ are the
comparison points of $x$ and $y$, respectively. A geodesic space $X$ is a {\it CAT(0) space} if all its geodesic triangles satisfy the
CAT($0$) inequality.
%VER SOBRE LOWER SEMICONTINUOUS MOTIVACI�N DE ADRIANA!

The following four point condition was used by Berg and Nikolaev \cite{beni} to characterize
CAT($0$) spaces.
\begin{theorem}\label{charact} Let $(X,d)$ be a geodesic space. $X$ is a CAT(0) space if and
only if for every $x, y, z, p \in X$,
$$d(x, z)^2 + d(y, p)^2 \leq d(x, y)^2 + d(y, z)^2 + d(z, p)^2 + d(p,x)^2.$$
\end{theorem}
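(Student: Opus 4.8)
\medskip
\noindent\emph{Proof strategy.} This is an equivalence, and I would establish the two implications by rather different arguments. For the implication that a CAT(0) space satisfies the four-point inequality, I would invoke the CN inequality of Bruhat--Tits, valid in every CAT(0) space: if $m$ is the midpoint of $[u,v]$, then $d(w,m)^2\le\frac{1}{2}d(w,u)^2+\frac{1}{2}d(w,v)^2-\frac{1}{4}d(u,v)^2$ for every $w$. Given $x,y,z,p$, set $m=\frac{x+z}{2}$ and apply the CN inequality twice, once with $w=y$ and once with $w=p$, to obtain
$$d(y,m)^2\le\frac{1}{2}d(x,y)^2+\frac{1}{2}d(y,z)^2-\frac{1}{4}d(x,z)^2,\qquad d(p,m)^2\le\frac{1}{2}d(p,x)^2+\frac{1}{2}d(p,z)^2-\frac{1}{4}d(x,z)^2.$$
Since $d(y,p)\le d(y,m)+d(m,p)$ and $(\alpha+\beta)^2\le2\alpha^2+2\beta^2$, we get $d(y,p)^2\le2d(y,m)^2+2d(p,m)^2$; substituting the two bounds and rearranging gives precisely $d(x,z)^2+d(y,p)^2\le d(x,y)^2+d(y,z)^2+d(z,p)^2+d(p,x)^2$. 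No real difficulty arises here.

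For the converse I would reduce matters to proving the CN inequality, since a geodesic space satisfying CN is CAT(0) (Bruhat--Tits; see also \cite{brha}); equivalently, one must show that $t\mapsto d(p,\gamma(t))^2$ satisfies $d(p,\gamma(t))^2\le(1-t)d(p,\gamma(0))^2+t\,d(p,\gamma(1))^2-t(1-t)d(\gamma(0),\gamma(1))^2$ along every geodesic $\gamma$. Applying the four-point hypothesis to the endpoints $x,z$ of a geodesic subsegment, its midpoint $m$, and an arbitrary point $p$ yields only the weaker estimate $d(p,m)^2\le d(p,x)^2+d(p,z)^2-\frac{1}{2}d(x,z)^2$, in which the coefficient of each endpoint term is $1$ rather than the required $\frac{1}{2}$. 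The heart of the proof is to sharpen this estimate: one applies the hypothesis repeatedly to carefully chosen four-point configurations along (and near) $\gamma$, using dyadic subdivisions together with auxiliary midpoints to decouple the estimate being proved from the one being used, and then passes to the limit so that the accumulated constants converge to the sharp CN constants. A convenient way to organize the bookkeeping is to reformulate the hypothesis through the quasilinearization $\langle\overrightarrow{ab},\overrightarrow{cd}\rangle=\frac{1}{2}\big(d(a,d)^2+d(b,c)^2-d(a,c)^2-d(b,d)^2\big)$, under which it reads $\langle\overrightarrow{xy},\overrightarrow{pz}\rangle\le\frac{1}{2}\big(d(x,y)^2+d(p,z)^2\big)$, and to strengthen this to the Cauchy--Schwarz inequality $\langle\overrightarrow{xy},\overrightarrow{pz}\rangle\le d(x,y)\,d(p,z)$, which is itself equivalent to $X$ being CAT(0).

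I expect the main obstacle to be exactly this last step. A single use of the four-point inequality costs a multiplicative factor, so the delicate point is to design the recursion (together with the sequence of auxiliary points one feeds into the inequality) so that the constants stabilize at their optimal values instead of degrading, and then to justify the limiting passage using the continuity of $t\mapsto d(p,\gamma(t))$.
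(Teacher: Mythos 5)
The paper itself offers no proof of Theorem~\ref{charact}: it is quoted as the characterization of Berg and Nikolaev \cite{beni}, so there is no internal argument to compare yours against; your proposal has to stand on its own. The forward implication as you give it is complete and correct: applying the Bruhat--Tits CN inequality at the midpoint $m$ of $[x,z]$ with $w=y$ and $w=p$, and combining with $d(y,p)^2\le 2d(y,m)^2+2d(p,m)^2$, the two terms $-\frac{1}{4}d(x,z)^2$ add up to exactly the $d(x,z)^2$ needed on the left-hand side. No difficulty there.

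The converse, however, is where the whole content of the theorem lies, and your proposal stops precisely at that point. You correctly note that feeding the configuration $(x,m,z,p)$ into the hypothesis only yields $d(p,m)^2\le d(p,x)^2+d(p,z)^2-\frac{1}{2}d(x,z)^2$, i.e.\ CN with the endpoint coefficients off by a factor of $2$, and you correctly name the target: upgrading $\langle\overrightarrow{xy},\overrightarrow{pz}\rangle\le\frac{1}{2}\bigl(d(x,y)^2+d(p,z)^2\bigr)$ to the Cauchy--Schwarz inequality $\langle\overrightarrow{xy},\overrightarrow{pz}\rangle\le d(x,y)\,d(p,z)$. But ``apply the hypothesis to dyadic subdivisions with auxiliary midpoints so that the constants stabilize at the sharp values'' is a hope, not an argument. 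In an inner-product space one passes from $\langle u,v\rangle\le\frac{1}{2}(\|u\|^2+\|v\|^2)$ to Cauchy--Schwarz by rescaling $u\mapsto tu$, $v\mapsto v/t$; the entire difficulty in the metric setting is that no such scaling exists and the quasilinearization is not bilinear, and a naive midpoint recursion simply reproduces the same defective constant at every scale rather than improving it --- you give no mechanism by which your iteration would do better, and you yourself flag this as the unresolved obstacle. The known proofs of this implication (Berg--Nikolaev's original argument via establishing Cauchy--Schwarz for the quasilinearization \cite{beni}, and Sato's later shorter proof) each require a genuinely additional idea beyond iterating the four-point inequality along a geodesic before any limit can be taken. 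As written, the hard half of the equivalence is asserted rather than proved, so the proposal has a real gap there.
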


In complete CAT($0$) spaces, the metric projection onto closed and convex subsets behaves as in Hilbert spaces in a certain sense.

\begin{proposition}[\cite{brha}, Proposition 2.4, p. 176] \label{projection}
Let $X$ be a complete CAT(0) space, $x\in X$ and $C\subset X$
nonempty closed and convex. Then
the following facts hold:
\begin{enumerate}

\item The metric projection $P_C(x)$ of $x$ onto $C$ is a
singleton.
\item If $y\in [x,P_C(x)]$, then $P_C(x)=P_C(y)$.
\item If $x\notin C$ and $y\in C$ with $y\neq P_C(x)$ then
$\angle_{P_C(x)}(x,y)\geq \pi/2$.
\item The mapping $P_C$ is a nonexpansive retraction from $X$ onto $C$. Further, the mapping $H : X \times [0,1] \rightarrow X$ associating to $(x,t)$ the point at distance $td(x,P_C(x))$ on the geodesic $[x,P_C(x)]$ is a continuous homotopy from the identity map of $X$ to $P_C$.
\end{enumerate}
\end{proposition}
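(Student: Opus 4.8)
The plan is to prove the four assertions in succession, each using the previous ones, with the only inputs being comparison with the Euclidean plane $\mathbb{E}^2$ (which makes $d(x,\cdot)$ and $d(x,\cdot)^2$ convex along geodesics) and the four--point inequality of Theorem \ref{charact}. For (1): fix $x$, set $r=d(x,C)$ and choose $y_n\in C$ with $d(x,y_n)\to r$. Convexity of $C$ gives $\frac{y_n+y_k}{2}\in C$, so $d\left(x,\frac{y_n+y_k}{2}\right)\ge r$, while comparing $\triangle(x,y_n,y_k)$ with $\mathbb{E}^2$ and applying the parallelogram law there gives $d\left(x,\frac{y_n+y_k}{2}\right)^2\le\frac12 d(x,y_n)^2+\frac12 d(x,y_k)^2-\frac14 d(y_n,y_k)^2$; hence $d(y_n,y_k)^2\le 2d(x,y_n)^2+2d(x,y_k)^2-4r^2\to 0$, so $(y_n)$ is Cauchy and, by completeness and closedness of $C$, converges to a point realizing $r$. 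The same midpoint estimate shows this point is unique, so $P_C(x)$ is a singleton. For (2): with $p=P_C(x)$ and $y\in[x,p]$ we have $d(x,p)=d(x,y)+d(y,p)$, so for each $z\in C$, $d(x,y)+d(y,p)=d(x,p)\le d(x,z)\le d(x,y)+d(y,z)$, whence $d(y,z)\ge d(y,p)$; since $p\in C$, uniqueness in (1) gives $p=P_C(y)$.

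Assertion (3) is the crux. With $p=P_C(x)$ I would first establish the ``Pythagorean inequality''
$$d(x,z)^2\ge d(x,p)^2+d(p,z)^2\qquad\text{for every }z\in C.$$
Indeed $[p,z]\subseteq C$, so setting $z_\lambda=(1-\lambda)p+\lambda z$ one has $d(x,z_\lambda)^2\ge d(x,p)^2$, whereas comparison with $\mathbb{E}^2$ yields $d(x,z_\lambda)^2\le(1-\lambda)d(x,p)^2+\lambda d(x,z)^2-\lambda(1-\lambda)d(p,z)^2$; subtracting, dividing by $\lambda>0$ and letting $\lambda\to 0^+$ gives the inequality (which in fact holds for every $x\in X$). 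Now let $c$ and $c'$ be the geodesics from $p$ to $x$ and from $p$ to $z$. For small $s>0$, $c(s)\in[x,p]=[x,P_C(x)]$, so $P_C(c(s))=p$ by (2), and the Pythagorean inequality applied to $c(s)$ and to $c'(t)\in C$ gives $d(c(s),c'(t))^2\ge s^2+t^2$. Hence the comparison angle at $p$ in $\triangle(c(s),p,c'(t))$ satisfies $\cos\overline{\angle}(c(s),p,c'(t))=\dfrac{s^2+t^2-d(c(s),c'(t))^2}{2st}\le 0$, and letting $s,t\to 0^+$ yields $\angle_p(x,z)\ge\pi/2$.

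For (4): $P_C$ is plainly a retraction of $X$ onto $C$. To see it is nonexpansive, let $p=P_C(x)$, $q=P_C(y)$ and apply Theorem \ref{charact} to the points $x,y,q,p$, obtaining $d(x,q)^2+d(y,p)^2\le d(x,y)^2+d(y,q)^2+d(q,p)^2+d(p,x)^2$; inserting the Pythagorean inequalities $d(x,q)^2\ge d(x,p)^2+d(p,q)^2$ and $d(y,p)^2\ge d(y,q)^2+d(p,q)^2$ and cancelling the common terms leaves $2d(p,q)^2\le d(x,y)^2+d(p,q)^2$, i.e. $d(P_C x,P_C y)\le d(x,y)$. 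The map $H(x,t)=(1-t)x+tP_C(x)$ is well defined since the space is uniquely geodesic, equals the identity at $t=0$ and $P_C$ at $t=1$, and is continuous because $d(H(x,t),H(x',t'))\le(1-t)d(x,x')+t\,d(P_C x,P_C x')+|t-t'|\,d(x',P_C x')$, where the first two terms are controlled via convexity of the metric and the nonexpansiveness of $P_C$ and the last one tends to $0$.

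I expect the real difficulty to lie in the angle estimate in (3): the naive bound $d(x,c'(t))\ge d(x,p)$ only controls the comparison angle $\overline{\angle}(x,p,c'(t))$, which in a CAT($0$) space need not bound the Alexandrov angle $\angle_p(x,z)$ from below; the decisive step is to upgrade it to $d(c(s),c'(t))^2\ge s^2+t^2$ for the points $c(s)$ lying on $[p,x]$, and it is here that assertion (2) together with the convexity--derived Pythagorean inequality must be used.
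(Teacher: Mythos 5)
Your argument is correct and complete, but note that the paper does not prove this proposition at all: it is quoted verbatim from Bridson--Haefliger (\cite{brha}, Proposition 2.4, p.~176), so there is no internal proof to compare with. Measured against the standard textbook proof, your treatment of (1)--(3) is essentially the classical one: the CN/parallelogram estimate from comparison triangles for existence, uniqueness and the Cauchy property of minimizing sequences; the additivity of distances along $[x,P_C(x)]$ for (2); and, for (3), the strong convexity of $d(x,\cdot)^2$ along geodesics in $C$ yielding the Pythagorean-type inequality $d(x,z)^2\ge d(x,P_C(x))^2+d(P_C(x),z)^2$, which you correctly upgrade to points $c(s)\in[x,P_C(x)]$ via (2) so that the comparison angles at $P_C(x)$ with \emph{both} sides shrinking are $\ge\pi/2$ --- you rightly flag that bounding only the comparison angle with the full side $[P_C(x),x]$ would not suffice, since in CAT($0$) the Alexandrov angle is only bounded above by comparison angles. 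Where you genuinely diverge is (4): Bridson--Haefliger deduce nonexpansiveness from the angle condition of (3) via a comparison (Alexandrov's lemma/quadrilateral) argument, whereas you obtain it in two lines from the Berg--Nikolaev four-point inequality (Theorem \ref{charact}) combined with the two Pythagorean inequalities at $P_C(x)$ and $P_C(y)$; this is a clean shortcut that fits well with the tools the paper already states, at the cost of invoking the (deeper, and historically later) quadrilateral characterization instead of bare comparison geometry. Your continuity estimate for the homotopy $H$ is also fine, using convexity of the metric, nonexpansiveness of $P_C$, and local boundedness of $x\mapsto d(x,P_C(x))$.
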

\noindent For a thorough treatment of CAT($0$) spaces and related topics the reader can check \cite{brha,gr}.

In the next section we will also work with reflexive metric spaces which extend the notion of reflexivity from Banach to metric spaces. A geodesic metric space $X$ is said to be {\it reflexive} if for every decreasing chain
$ \{C_\alpha\} \subset X$ with $\alpha \in I$ such that $C_\alpha$ is closed convex
bounded and nonempty for all $\alpha\in I$ we have that $\displaystyle{\bigcap_{\alpha \in I} C_\alpha\neq \emptyset}$. Notice that every complete uniformly convex metric space with either a monotone or lower
semicontinuous from the right modulus of uniform convexity is reflexive (see \cite{leus1,esfe1}). Also note that a reflexive and Busemann convex geodesic space is complete (see \cite[Lemma 4.1]{esfe3}). Moreover, in such a context the metric projection onto closed and convex subsets is well-defined and singlevalued.

Next we give the definition of relatively nonexpansive mapping on the union of two sets.

\begin{definition} Suppose $A$ and $B$ are two nonempty subsets of a metric space $X$. A mapping $T : A \cup B \rightarrow A \cup B$ is relatively nonexpansive
if $d(Tx,Ty) \leq d(x,y)$ for all $x\in A$ and $y\in B$.
\end{definition}
We say that a relatively nonexpansive mapping $T$ is {\it cyclic} if $T(A) \subseteq B$ and $T(B) \subseteq A$ and {\it noncyclic} if $T(A) \subseteq A$ and $T(B) \subseteq B$.

In \cite{elvek}, the notion of proximal normal structure was introduced as a counterpart of the well-known concept of normal structure. This concept plays the same role for relatively nonexpansive mappings as normal structure plays for nonexpansive mappings. We state below this notion in the setting of geodesic metric spaces.

\begin{definition} A convex pair $(K_1,K_2)$ in a geodesic space is said to
have proximal normal structure if for any closed bounded convex proximal
pair $(H_1,H_2) \subseteq (K_1,K_2)$ for which $\text{dist}(H_1,H_2) = \text{dist}(K_1,K_2)$ and
$\delta(H_1,H_2) > \text{dist}(H_1,H_2)$, there exists $(x_1, x_2) \in  H_1 \times H_2$ such that
$$\delta(x_1,H_2) < \delta(H_1,H_2) \quad \mbox{and} \quad \delta(x_2,H_1) < \delta(H_1,H_2).$$
\end{definition}
As in the linear case, a pair $(K,K)$ has proximal normal structure if and only if
$K$ has normal structure in the sense of Brodski and Milman \cite{brmi}.

\section{Main results}

Given a pair of sets $(A,B)$ in a metric space $X$, let $A_0$ and $B_0$ be the subsets defined as follows:
$$A_0=\{x \in A : d(x,y^{\prime})= \text{dist}(A,B) \text{ for some } y^{\prime} \in B\},$$
$$B_0=\{y \in B : d(x^{\prime},y)= \text{dist}(A,B) \text{ for some } x^{\prime} \in A\}.
$$
\begin{proposition}\label{tec} Let $X$ be a reflexive and Busemann convex metric
space and let $(A,B)$ be a nonempty closed convex pair of subsets in $X$. Suppose additionally $B$ is bounded. Then $A_0$ and $B_0$ are closed, convex,  bounded and nonempty.

\end{proposition}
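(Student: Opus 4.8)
The plan is to route everything through the distance functions $\varphi(x)=d(x,B)$ and $\psi(y)=d(y,A)$. The first observation is that, since $\varphi\ge\text{dist}(A,B)$ on $A$ and $\psi\ge\text{dist}(A,B)$ on $B$, one has $A_0=\{x\in A:\varphi(x)\le\text{dist}(A,B)\}$ and $B_0=\{y\in B:\psi(y)\le\text{dist}(A,B)\}$; that is, both are the intersection of the ambient closed convex set with a sublevel set of a distance function. So the whole proof reduces to two properties of $\varphi$ (and, symmetrically, of $\psi$): it is $1$-Lipschitz, hence continuous; and it is convex. Continuity is just the triangle inequality. For convexity I would fix $x_1,x_2\in X$, $t\in[0,1]$, set $m=(1-t)x_1+tx_2$, approximate $\varphi(x_i)$ by $d(x_i,b_i)$ with $b_i\in B$, note that $(1-t)b_1+tb_2\in B$ by convexity of $B$ (and uniqueness of geodesics), and apply Busemann convexity to the geodesics $[x_1,x_2]$ and $[b_1,b_2]$ to bound $d(m,(1-t)b_1+tb_2)\le(1-t)d(x_1,b_1)+td(x_2,b_2)$; letting the approximations tighten then gives $\varphi(m)\le(1-t)\varphi(x_1)+t\varphi(x_2)$.

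Granting these two properties, closedness and convexity of $A_0$ and $B_0$ are immediate, being sublevel sets of continuous convex functions intersected with closed convex sets; one uses along the way that if $m$ lies in $A$ (resp. $B$) then $\varphi(m)\le\text{dist}(A,B)$ forces $\varphi(m)=\text{dist}(A,B)$. For boundedness, $B_0\subseteq B$ is bounded by hypothesis, and for $A_0$ I would fix $b_0\in B$ and observe that every $x\in A_0$ satisfies $d(x,b_0)\le d(x,B)+\delta(b_0,B)=\text{dist}(A,B)+\delta(b_0,B)<\infty$, so $A_0$ sits inside a fixed ball.

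For nonemptiness it suffices to produce one point of $A_0$: once $x\in A_0$ is in hand, any $y'\in B$ realizing $d(x,y')=\text{dist}(A,B)$ — such a point exists because the metric projection $P_B$ onto the closed convex set $B$ is well defined in this setting — lies in $B_0$. To obtain such an $x$, I would consider the sets $C_n=\{x\in A:\varphi(x)\le\text{dist}(A,B)+1/n\}$: each is nonempty by the definition of $\text{dist}(A,B)$ as an infimum, closed and convex by the properties of $\varphi$, bounded because it lies in the common ball $B\big(b_0,\text{dist}(A,B)+1+\delta(b_0,B)\big)$, and $C_1\supseteq C_2\supseteq\cdots$. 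Reflexivity of $X$ then yields a point $x\in\bigcap_{n}C_n$, and any such $x$ satisfies $d(x,B)=\text{dist}(A,B)$, i.e. $x\in A_0$.

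The step that requires the most care — and where the boundedness of $B$ is essential — is the nonemptiness of $A_0$: since $A$ itself need not be bounded, one cannot apply reflexivity to $A$ or to bare sublevel sets of $\varphi$, and it is precisely the bound on $B$ that confines the sets $C_n$ to a common ball, so that the decreasing-chain characterization of reflexivity applies. Everything else is a routine consequence of the Busemann convexity of $X$, via the convexity of the distance functions $\varphi$ and $\psi$, together with the defining inequalities for $A_0$ and $B_0$.
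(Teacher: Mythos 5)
Your argument is correct and is essentially the paper's proof in a slightly reorganized form: both identify the relevant sets as sublevel sets of a distance function to a convex set, prove convexity of these sets via Busemann convexity, and obtain nonemptiness by applying reflexivity to a decreasing family of closed, convex, bounded, nonempty sets, with the boundedness of $B$ supplying the required bound. The only cosmetic differences are that you run the reflexivity argument inside $A$ (confined to a ball around a point of $B$) whereas the paper intersects the $\varepsilon$-neighbourhoods of $A$ with the bounded set $B$ and produces $B_0$ first, that you prove convexity of $d(\cdot,B)$ by an approximation argument rather than through the metric projection, and that your identification of $A_0$ and $B_0$ as sublevel sets tacitly uses attainment of the nearest point, i.e.\ the well-definedness of $P_B$ and $P_A$ in reflexive Busemann convex spaces, a fact you do invoke and which the paper uses as well.
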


\begin{proof} First we see that $B_0$ is closed, convex, bounded and nonempty. Given any real number $\varepsilon >0$, consider the set
$$
A'_\varepsilon =\{x \in X : d(x,A)\leq \text{dist}(A,B) + \varepsilon\}.$$
%Since $A \subseteq A_\varepsilon$
It is easy to see that $A'_\varepsilon$ is nonempty and closed. Moreover, $A'_\varepsilon$ is convex. Let $x$ and $y$ be two points in $A'_\varepsilon$ and $m$ the midpoint between them. Since the space is Busemann convex, we have that
\begin{align*}
d(m,A) & =d(m,P_A(m))\leq d\bigg{(}m,\frac{P_A(x)+P_A(y)}{2}\bigg{)}\\
&\leq \max \{d(x,A),d(y,A)\}\leq \text{dist}(A,B)+\varepsilon.
\end{align*}
Thus $m \in A'_\varepsilon$.

Now consider the set $A_\varepsilon=A'_\varepsilon \cap B$. It is immediate that $A_\varepsilon$ is closed, convex and bounded. Moreover, by definition of $A'_\varepsilon$, $A_\varepsilon$ is also nonempty. Then, by means of the reflexivity of the space, we conclude that $\cap_{\varepsilon >0} A_\varepsilon \neq \emptyset$. Since $B_0=\displaystyle{\cap_{\varepsilon >0} A_\varepsilon}$, we see that $B_0$ is closed, convex, bounded and nonempty.
Notice that $A_0$ is bounded and nonempty since $B_0$ is so. The fact that $A_0$ is also closed and convex follows by a straightforward verification. \end{proof}

\begin{remark} Notice that in the previous result we just need one of the sets $A$ and $B$ to be bounded, no matter which of them.
\end{remark}

\begin{theorem}\label{generaliza} Let $X$ be a reflexive and Busemann convex metric
space and let $(A,B)$ be a nonempty closed convex pair of subsets of $X$ such that $A$ is bounded. Let
$T : A \cup B \rightarrow A \cup B$ be a cyclic relatively nonexpansive mapping. Suppose $(A,B)$ has proximal normal structure. Then there exists a pair $(x,y) \in A \times B$ such that $d(x,Tx)=d(y,Ty)=\emph{dist}(A,B)$.
\end{theorem}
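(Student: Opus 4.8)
The plan is to transpose the proof of Eldred, Kirk and Veeramani from the Banach setting to ours, using the reflexivity of $X$ where they use weak compactness and Busemann convexity where they use the linear structure; the first move is to reduce to a proximal pair. By Proposition \ref{tec}, $(A_0,B_0)$ is a nonempty closed convex bounded pair, and it is proximal straight from the definitions of $A_0$ and $B_0$. If $x\in A_0$ and $y\in B_0$ are proximal points, then $Tx\in B$, $Ty\in A$ and $d(Tx,Ty)\le d(x,y)=\text{dist}(A,B)$ force $Tx\in B_0$ and $Ty\in A_0$, so $T$ restricts to a cyclic relatively nonexpansive self-map of $A_0\cup B_0$; since $\text{dist}(A_0,B_0)=\text{dist}(A,B)$, every proximal subpair of $(A_0,B_0)$ realizing this distance is such a subpair of $(A,B)$, so proximal normal structure passes to $(A_0,B_0)$. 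We may therefore assume $(A,B)$ itself is proximal and set $d_0=\text{dist}(A,B)$.

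Next I would produce a minimal invariant pair. Let $\mathcal F$ be the set of nonempty closed convex pairs $(C_1,C_2)$ with $C_1\subseteq A$, $C_2\subseteq B$, $T(C_1)\subseteq C_2$, $T(C_2)\subseteq C_1$ and $\text{dist}(C_1,C_2)=d_0$, ordered by reverse coordinatewise inclusion. It contains $(A,B)$; for a chain the coordinatewise intersections are nonempty by reflexivity, closed, convex and $T$-invariant, and that their distance is still $d_0$ I would obtain, as in the linear case, from reflexivity applied to the associated nested sets of pairs of proximal points (nonempty because the distance between closed bounded convex sets is attained, using that $d(\,\cdot\,,C)$ is convex in a Busemann convex space; closed; bounded; and convex since $(a,b)\mapsto d(a,b)$ is convex along pairs of geodesics). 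Zorn's lemma yields a minimal $(K_1,K_2)\in\mathcal F$. From minimality I would deduce, first, $K_1=\overline{\text{conv}}(T(K_2))$ and $K_2=\overline{\text{conv}}(T(K_1))$, because $(\overline{\text{conv}}(T(K_2)),\overline{\text{conv}}(T(K_1)))\in\mathcal F$ — its distance being again $d_0$, since $\text{dist}(T(K_2),T(K_1))\le\text{dist}(K_2,K_1)=d_0$ by relative nonexpansiveness — and, second, by applying minimality to the closest-point subpair $\bigl(\{x\in K_1:d(x,K_2)=d_0\},\{y\in K_2:d(y,K_1)=d_0\}\bigr)$ (which is again in $\mathcal F$: it is $T$-invariant because $d(Tx,K_1)\le d(Tx,T(K_2))\le d(x,K_2)=d_0$ while $Tx\in K_2$ forces $d(Tx,K_1)\ge d_0$), that $(K_1,K_2)$ is proximal and every point of $K_1$ realizes $d_0$ to $K_2$ and conversely.

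It then remains to force $\delta(K_1,K_2)=d_0$. Suppose $\delta(K_1,K_2)>d_0=\text{dist}(K_1,K_2)$. Since $(K_1,K_2)$ is a closed bounded convex proximal subpair of $(A,B)$ with $\text{dist}(K_1,K_2)=\text{dist}(A,B)$, proximal normal structure provides $(p,q)\in K_1\times K_2$ with $\delta(p,K_2)<\delta(K_1,K_2)$ and $\delta(q,K_1)<\delta(K_1,K_2)$. Put $r=\max\{\delta(p,K_2),\delta(q,K_1)\}<\delta(K_1,K_2)$ and $L_1=\{x\in K_1:\delta(x,K_2)\le r\}$, $L_2=\{y\in K_2:\delta(y,K_1)\le r\}$. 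These are nonempty ($p\in L_1$, $q\in L_2$), closed, bounded, and convex, because $\delta(\,\cdot\,,C)=\sup_{z\in C}d(\,\cdot\,,z)$ is convex (the metric being convex); and they form a $T$-invariant pair: as closed balls are convex one has $\delta(\,\cdot\,,\overline{\text{conv}}(S))=\delta(\,\cdot\,,S)$, so for $x\in L_1$
$$\delta(Tx,K_1)=\delta(Tx,\overline{\text{conv}}(T(K_2)))=\delta(Tx,T(K_2))=\sup_{w\in K_2}d(Tx,Tw)\le\sup_{w\in K_2}d(x,w)=\delta(x,K_2)\le r,$$
whence $Tx\in L_2$ (and $Tx\in K_2$ already), and symmetrically $T(L_2)\subseteq L_1$. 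Since $\delta(L_1,L_2)\le r<\delta(K_1,K_2)$, the pair $(L_1,L_2)$ is strictly smaller than $(K_1,K_2)$; so once one checks $(L_1,L_2)\in\mathcal F$ — the one remaining point being $\text{dist}(L_1,L_2)=d_0$ — minimality is contradicted. Thus $\delta(K_1,K_2)=\text{dist}(K_1,K_2)=d_0$, and then any $x\in K_1$, $y\in K_2$ satisfy $d(x,y)=d_0$ with $Tx\in K_2$ and $Ty\in K_1$, giving $d(x,Tx)=d(y,Ty)=\text{dist}(A,B)$ and $(x,y)\in A\times B$, as required.

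The step I expect to be the main obstacle is exactly this last one: verifying that the diameter-reducing pair $(L_1,L_2)$ still lies in $\mathcal F$, i.e.\ that $\text{dist}(L_1,L_2)=\text{dist}(A,B)$. In a Banach space this follows from the behaviour of metric projections between proximal sets; here it must instead be extracted from Busemann convexity together with the relations $K_1=\overline{\text{conv}}(T(K_2))$, $K_2=\overline{\text{conv}}(T(K_1))$ and the proximality of the minimal pair (for instance through the uniqueness of metric projections and the intertwining of $T$ with these projections). The other adaptations — carrying the distance constraint through the chain argument by reflexivity rather than weak compactness, and freely replacing a bounded set by its closed convex hull in distance and diameter estimates (legitimate since balls are convex) — are comparatively routine.
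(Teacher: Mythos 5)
Your overall architecture (reduce to $(A_0,B_0)$, extract a minimal $T$-invariant pair by Zorn, then use proximal normal structure to shrink the Chebyshev-type radius and contradict minimality) is the same as the paper's, and your hull-based verification that $T(L_1)\subseteq L_2$ is a legitimate, even cleaner, alternative to the paper's second minimality argument. But the step you yourself flag as the main obstacle --- showing $\text{dist}(L_1,L_2)=\text{dist}(A,B)$ --- is precisely where the paper's key idea lives, and your proposal neither supplies it nor points in the right direction: no projection--intertwining is needed. The paper's device is a midpoint construction: given $(y_1,y_2)\in K_1\times K_2$ from proximal normal structure with $\delta(y_1,K_2),\delta(y_2,K_1)\le\lambda\,\delta(K_1,K_2)$, pick proximal partners $y_1',y_2'$ with $d(y_1,y_2')=d(y_1',y_2)=\text{dist}(K_1,K_2)$ and set $x_1=\frac{y_1+y_1'}{2}$, $x_2=\frac{y_2+y_2'}{2}$. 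Busemann convexity gives $d(x_1,x_2)=\text{dist}(K_1,K_2)$, while convexity of the metric gives $\delta(x_1,K_2)\le\frac{1+\lambda}{2}\delta(K_1,K_2)$ and likewise for $x_2$. Taking $r=\frac{1+\lambda}{2}\delta(K_1,K_2)$ (your choice $r=\max\{\delta(p,K_2),\delta(q,K_1)\}$ is too small: nothing guarantees a proximal pair inside your $L_1\times L_2$), the proximal pair $(x_1,x_2)$ lies in $L_1\times L_2$, so $\text{dist}(L_1,L_2)=\text{dist}(A,B)$ and the contradiction goes through. The paper additionally builds the ``proximal partner'' condition into the very definition of $L_1,L_2$, because its Zorn family $\Gamma$ requires the pairs to be proximal; with your weaker family the midpoint argument alone closes the gap, but it must be made.

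A secondary gap is the chain step in Zorn's lemma. You propose to apply reflexivity to ``nested sets of pairs of proximal points,'' i.e.\ to subsets of $X\times X$; reflexivity (and Busemann convexity) of a product space is not among the hypotheses and would itself require proof. The paper avoids this entirely: strict convexity (a consequence of Busemann convexity) makes the point of $B_0$ at distance $\text{dist}(A_0,B_0)$ from a fixed $p$ unique, so the proximal partners furnished by the members of the chain all coincide and survive to the intersection, with no compactness argument in the product. If you prefer to stay with your proximality-free family, you can instead apply reflexivity in $X$ to the nested sets $\{a\in C_1^\alpha : d(a,C_2^\alpha)=d_0\}$ (nonempty, closed, bounded, convex by convexity of the metric) and then invoke the same uniqueness; as written, however, the step is not justified.
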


\begin{proof} This result follows by applying similar patterns as in the proof of Theorem 2.1 in \cite{elvek}. However, in this more general setting, several changes and new techniques must be considered to get the result.
From Proposition \ref{tec} we have that $(A_0,B_0)$ is closed, convex, bounded and nonempty. Moreover, we may notice that this pair is also proximal and satisfies $\text{dist}(A_0,B_0)=\text{dist}(A,B)$. It is easy to see that $T(A_0) \subseteq B_0$ and $T(B_0) \subseteq A_0$. Since $(A,B)$ has proximal normal structure, so does $(A_0, B_0)$.

Consider the family $\Gamma$ of sets $F \subseteq A_0 \cup B_0$ such that $F \cap A_0$ and $F \cap B_0$ are closed, convex and nonempty and satisfy
$T(F \cap A_0) \subseteq F \cap B_0$ and $T(F \cap B_0) \subseteq F \cap A_0$, $\text{dist}(F \cap A_0,F \cap B_0)=\text{dist}(A_0,B_0)$ and the pair $(F \cap A_0, F \cap B_0)$ is proximal. Since $A_0 \cup B_0 \in \Gamma$, we have that $\Gamma \neq \emptyset$.

Let $\{F_\alpha\}_{\alpha \in I}$ be a decreasing chain in $\Gamma$. We see that $F_0= \cap_{\alpha \in I} F_\alpha \in \Gamma$. Since
%$F_0=\bigg{(} \cap_{\alpha \in I} (F_\alpha \cap A_0)\bigg{)} \bigcup \bigg{(} \cap_{\alpha \in I} (F_\alpha \cap B_0)\bigg{)}$
$F_0 \cap A_0=\cap_{\alpha \in I} (F_\alpha \cap A_0)$ and $X$ is reflexive, we have that $F_0 \cap A_0$ is closed, convex and nonempty. Similarly  $F_0 \cap B_0$ is closed, convex and nonempty. It can be easily proved that $T(F_0 \cap A_0) \subseteq F_0 \cap B_0$ and $T(F_0 \cap B_0) \subseteq F_0 \cap A_0$. Thus, to see that $F_0 \in \Gamma$, it remains to prove that the pair $(F_0\cap A_0,F_0 \cap B_0)$ is proximal and $\text{dist}(F_0\cap A_0,F_0\cap B_0)=\text{dist}(A_0,B_0)$. Let $p \in F_0 \cap A_0$. By definition, $p \in F_\alpha \cap A_0$ for every $\alpha \in I$. Since $(A_0,B_0)$ is proximal and $X$ is strictly convex, there exists a unique point $q \in B_0$ such that $d(p,q)=\text{dist}(A_0,B_0)$. Moreover, by using the proximity of $(F_\alpha \cap A_0,F_\alpha \cap B_0)$, we have that there exists a point $q_\alpha \in F_\alpha \cap B_0$ such that $d(p, q_\alpha)=\text{dist}(F_\alpha \cap A_0,F_\alpha \cap B_0)=\text{dist}(A_0,B_0)$ for every $\alpha \in I$. However, since $q_\alpha \in B_0$, we have that $q_\alpha=q$ for every $\alpha \in I$ and therefore $q \in F_0 \cap B_0$. Consequently, $F_0 \in \Gamma$ and applying Zorn's Lemma we obtain a minimal element $K$ in $\Gamma$.

Let $K_1=K \cap A_0$ and $K_2=K \cap B_0$. If $\delta(K_1,K_2)=\text{dist}(K_1,K_2)$, then $d(p,q)=\text{dist}(K_1,K_2)$ for every pair $(p,q)\in K_1 \times K_2$. In particular, $d(x,Tx)=\text{dist}(K_1,K_2)=\text{dist}(A_0,B_0)$ for every $x \in K$ and the result holds.
Suppose now $\delta(K_1,K_2)>\text{dist}(K_1,K_2)$. Since $(A_0,B_0)$ has proximal normal structure, there exists $(y_1,y_2) \in K_1 \times K_2$ and $\lambda \in (0,1)$ such that $\delta(y_1,K_2) \leq \lambda \delta(K_1,K_2)$ and $\delta(y_2,K_1)\leq \lambda \delta(K_1,K_2)$. Since $K \in \Gamma$, $(K_1,K_2)$ is proximal. In fact, we may notice by the strict convexity of $X$ that for every point $p\in K$ there exists only one point $q \in K$ such that $d(p,q)=\text{dist}(K_1,K_2)$. Let $(y'_1, y'_2) \in K_1 \times K_2$ such that $d(y_1,y'_2)=d(y'_1,y_2)=\text{dist}(K_1,K_2)$. Denote by $x_1$ and $x_2$ the midpoints of $y_1$,$y'_1$ and $y_2$,$y'_2$, respectively. By the Busemann convexity it follows that $d(x_1,x_2) = \mbox{dist}(K_1,K_2)$. Since the metric in $X$ is convex, for every $z\in K_2$ we have
\begin{align*}
d(z,x_1) &\leq \frac{1}{2} d(z,y_1) + \frac{1}{2} d(z,y'_1) \leq \frac{1}{2} (\delta(y_1,K_2)+\delta(K_1,K_2))\\
& \leq \frac{(1+\lambda)}{2} \delta(K_1,K_2).
\end{align*}
Similarly, for every $z \in K_1$ we have $d(z,x_2) \leq \frac{(1+\lambda)}{2} \delta(K_1,K_2).$
Thus, there exists a pair of proximal points $(x_1,x_2)\in K_1 \times K_2$ and $\alpha \in (0,1)$ satisfying
$$\delta(x_1,K_2) \leq \alpha \delta(K_1,K_2) \text{ and } \delta(x_2,K_1)\leq \alpha \delta(K_1,K_2).
$$
Now consider the sets $L_1 \subseteq K_1$ and $L_2 \subseteq K_2$ defined as
\begin{align*}
 L_1 = \{ x\in K_1 : \ & \delta(x,K_2) \leq \alpha \delta(K_1,K_2) \text{ and for its proximal point }  y \in K_2,\\
& \delta(y,K_1)\leq \alpha \delta(K_1,K_2)\},
\end{align*}
\begin{align*}
 L_2 = \{ y\in K_2 : \ & \delta(y,K_1) \leq \alpha \delta(K_1,K_2) \text{ and for its proximal point }  x \in K_1,\\
& \delta(x,K_2)\leq \alpha \delta(K_1,K_2)\}.
\end{align*}
Since $x_1 \in L_1$ and $x_2 \in L_2$, $L_i\neq\emptyset$ for $i=1,2$. Next we show that $L_i$ is closed and convex for $i=1,2$. We just give the details for $L_1$ since for $L_2$ the proof follows similar patterns.
Let $\{v_n\} \subseteq L_1$ be a sequence that converges to a point $v\in K_1$. Since $d(v_n,z)\leq \alpha \delta(K_1,K_2)$ for every $n\in \mathbb{N}$ and $z\in K_2$, we get $\delta(v,K_2)\leq \alpha \delta(K_1,K_2)$. The fact that $v_n\in L_1$ implies
\begin{equation}\label{1}
\delta ( w_n,K_1)\leq \alpha \delta(K_1,K_2),
\end{equation}
where $w_n\in K_2$ is the proximal point of $v_n \in K_1$. Let $w \in K_2$ such that $d(v,w)=\text{dist}(K_1,K_2)$. By the Busemann convexity, we get
$$d\bigg{(}\frac{v_n+v}{2},\frac{w_n+w}{2}\bigg{)}=\text{dist}(K_1,K_2).$$
Now, by Proposition \ref{para}, we have $d(w_n,w)=d(v_n,v)$ for $n \in \mathbb{N}$, from where $w_n \to w$. Taking limit
in (\ref{1}), we may conclude $\delta(w,K_1)\leq \alpha \delta(K_1,K_2)$.
Consequently $v\in L_1$ and then $L_1$ is closed. Let $p_1,q_1\in L_1$. Next we see that $m_1=\frac{p_1+q_1}{2} \in L_1$. Let $p_2,q_2 \in K_2$ be the proximal points of $p_1$ and $q_1$, respectively. Consider $m_2=\frac{p_2+q_2}{2}$. Since $(K_1,K_2)$ is proximal and the space is Busemann convex, $d(m_1,m_2)=\text{dist}(K_1,K_2)$.
Let $z \in K_2$. The convexity of the metric implies
$$d(z,m_1)\leq \frac{1}{2}d(p_1,z)+\frac{1}{2}d(q_1,z)\leq \alpha \delta(K_1,K_2).$$
Thus, $\delta(m_1,K_2)\leq \alpha \delta(K_1,K_2)$. The fact that $\delta(m_2,K_1)\leq \alpha \delta(K_1,K_2)$ follows similarly since $\delta(p_2,K_1)$ and $\delta(q_2,K_1)$ are both $\leq \alpha \delta(K_1,K_2)$. Then $m_1 \in L_1$ and so $L_1$ is convex.

\noindent From $d(x_1,x_2)=\text{dist}(K_1,K_2)$ we get $\text{dist}(L_1,L_2)=\text{dist}(A_0,B_0)$. Moreover, by the definition of the sets $L_i$ with $i=1,2$, it is immediate that $(L_1,L_2)$ is a proximal pair.

In the sequel we see that $T(L_1)\subseteq L_2$ and $T(L_2) \subseteq L_1$. Let $x \in L_1$ and $y\in L_2$ such that $d(x,y)=\text{dist}(L_1,L_2)$. We prove that $Tx \in L_2$. Let $z \in K_2$. Since $d(Tx,Tz)\leq d(x,z)\leq \delta(x,K_2)\leq \alpha \delta(K_1,K_2)$, we get
$$T(K_2)\subseteq B(Tx,\alpha \delta(K_1,K_2))\cap K_1:=K^\prime_1.$$
Then $K^\prime_1$ is closed, convex and nonempty. Let $K^\prime_2\subseteq K_2$ be the set defined as
$$K^\prime_2=\{y^\prime \in K_2 : \text{ there exists } x^\prime \in K^\prime_1 \text{ with } d(x^\prime,y^\prime)=\text{dist}(K_1,K_2)\}.$$
Similarly as we proved before that $L_1$ is closed and convex, we get that $K^\prime_2$ is closed, convex and nonempty.

Now we see that $T(K^\prime_1)\subseteq K^\prime_2$ and $T(K^\prime_2)\subseteq K^\prime_1$. The fact that $T(K^\prime_2)\subseteq K^\prime_1$ is immediate. Let $p \in K^\prime_1$ and $q\in K^\prime_2$ such that $d(p,q)=\text{dist}(K^\prime_1,K^\prime_2)$. Then $d(Tp,Tq)=\text{dist}(K^\prime_1,K^\prime_2)$. Since $q\in K^\prime_2$, we have $Tq \in K^\prime_1$ and therefore $Tp \in K^\prime_2$. Consequently, $T(K^\prime_1)\subseteq K^\prime_2$. Notice that, by definition, the pair $(K^\prime_1,K^\prime_2)$ is also proximal and satisfies $\text{dist}(K^\prime_1,K^\prime_2)=\text{dist}(K_1,K_2)$. Thus,
$K^\prime_1 \cup K^\prime_2 \in \Gamma$ and by minimality of $K$ it follows that $K^\prime_1=K_1$ and $K^\prime_2=K_2$. Consequently,
$K_1 \subseteq B(Tx, \alpha \delta(K_1,K_2))$ and therefore $\delta(Tx,K_1) \leq \alpha \delta(K_1,K_2)$. To conclude that $Tx \in L_2$ it remains to see that the proximal point $z\in K_1$ of $Tx \in K_2$ satisfies $\delta(z,K_2) \leq \alpha \delta(K_1,K_2)$. Since $T$ is relatively nonexpansive, we have that $z=Ty$. Thus, we only need to show that $\delta(Ty,K_2) \leq \alpha \delta(K_1,K_2)$. However, notice that this inequality holds if we repeat the previous
construction of $K^\prime_1$ and $K^\prime_2$ starting from the point $y\in L_2$ and considering any point $z \in K_1$. Thus, we have
$Tx \in L_2$ and therefore $T(L_1) \subseteq L_2$. In a similar way, we may see that $T(L_2) \subseteq L_1$. As a consequence, $L_1 \cup L_2 \in \Gamma$. Since, $\delta(L_1,L_2)\leq \alpha \delta(K_1,K_2)$, we get a contradiction with the minimality of $K$.\end{proof}

\begin{theorem}\label{generaliza2} Let $X$ be a reflexive and Busemann convex metric
space and let $(A,B)$ be a nonempty closed convex pair of subsets of $X$ such that $A$ is bounded. Let
$T : A \cup B \rightarrow A \cup B$ be a noncyclic relatively nonexpansive mapping. Suppose $(A,B)$ has proximal normal structure. Then there exists a pair $(x,y) \in A \times B$ such that $x=Tx$, $y=Ty$ and $d(x,y)=\emph{dist}(A,B)$.
\end{theorem}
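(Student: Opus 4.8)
The plan is to follow closely the scheme of the proof of Theorem~\ref{generaliza}, modifying each step so that $T$ now leaves each component of the pairs invariant instead of interchanging them. First I would apply Proposition~\ref{tec} to pass from $(A,B)$ to $(A_0,B_0)$, which is closed, convex, bounded and nonempty, is proximal, satisfies $\text{dist}(A_0,B_0)=\text{dist}(A,B)$, and, exactly as in the proof of Theorem~\ref{generaliza}, inherits proximal normal structure from $(A,B)$. Since $T$ is relatively nonexpansive and noncyclic, if $x\in A_0$ has proximal point $y\in B_0$ then $d(Tx,Ty)\le d(x,y)=\text{dist}(A,B)$ with $Tx\in A$ and $Ty\in B$, so $Tx\in A_0$ and $Ty\in B_0$; hence $T(A_0)\subseteq A_0$ and $T(B_0)\subseteq B_0$. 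Next I would apply Zorn's Lemma to the family $\Gamma$ of sets $F\subseteq A_0\cup B_0$ for which $F\cap A_0$ and $F\cap B_0$ are nonempty, closed and convex, $(F\cap A_0,F\cap B_0)$ is proximal with $\text{dist}(F\cap A_0,F\cap B_0)=\text{dist}(A_0,B_0)$, and $T(F\cap A_0)\subseteq F\cap A_0$, $T(F\cap B_0)\subseteq F\cap B_0$; the verifications that $A_0\cup B_0\in\Gamma$ and that every decreasing chain in $\Gamma$ has a lower bound in $\Gamma$ (using reflexivity for nonemptiness of the intersection and strict convexity for uniqueness of proximal points) are identical to the corresponding parts of the proof of Theorem~\ref{generaliza}. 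Let $K$ be a minimal element of $\Gamma$, put $K_1=K\cap A_0$, $K_2=K\cap B_0$, and split into the two cases $\delta(K_1,K_2)=\text{dist}(K_1,K_2)$ and $\delta(K_1,K_2)>\text{dist}(K_1,K_2)$.

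If $\delta(K_1,K_2)=\text{dist}(K_1,K_2)$, then, since $\text{dist}(K_1,K_2)=\text{dist}(A,B)$, every pair in $K_1\times K_2$ is a pair of proximal points. Fixing $p_0\in K_1$, each point of $K_2$ lies at distance $\text{dist}(A,B)$ from $p_0$; since $K_2$ is convex and $X$ is Busemann convex, hence strictly convex, $K_2$ must be a singleton (the midpoint of two distinct points of $K_2$ would again lie in $K_2$ yet at distance strictly smaller than $\text{dist}(A,B)$ from $p_0$). Symmetrically $K_1$ is a singleton, say $K_1=\{x\}$, $K_2=\{y\}$, with $d(x,y)=\text{dist}(A,B)$. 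From $T(K_1)\subseteq K_1$ and $T(K_2)\subseteq K_2$ we get $Tx=x$ and $Ty=y$, which is the assertion of the theorem.

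It remains to rule out $\delta(K_1,K_2)>\text{dist}(K_1,K_2)$. Applying the proximal normal structure of $(A_0,B_0)$ to the pair $(K_1,K_2)$ and then the midpoint construction via Busemann convexity, exactly as in the proof of Theorem~\ref{generaliza}, I would obtain a pair of proximal points $(x_1,x_2)\in K_1\times K_2$ and a constant $\alpha\in(0,1)$ such that $\delta(x_1,K_2)\le\alpha\,\delta(K_1,K_2)$ and $\delta(x_2,K_1)\le\alpha\,\delta(K_1,K_2)$, and then define $L_1$ and $L_2$ by the same formulas as in that proof. They are nonempty (they contain $x_1$ and $x_2$), and the proofs that $L_1$ and $L_2$ are closed and convex carry over verbatim, using the convexity of the metric together with Proposition~\ref{para}. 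The main point, and the step I expect to require the most care, is to show $T(L_1)\subseteq L_1$ and, symmetrically, $T(L_2)\subseteq L_2$, where the noncyclic hypothesis now forces each set to be mapped into itself rather than interchanged. Given $x\in L_1$ with proximal point $y\in L_2$, for every $z\in K_2$ we have $d(Tx,Tz)\le d(x,z)\le\delta(x,K_2)\le\alpha\,\delta(K_1,K_2)$ and $Tz\in K_2$, so $K'_2:=B(Tx,\alpha\,\delta(K_1,K_2))\cap K_2$ is a nonempty closed convex subset of $K_2$ containing $T(K_2)$, whence $T(K'_2)\subseteq K'_2$; letting $K'_1$ be the set of points of $K_1$ admitting a proximal point in $K'_2$, one checks as for $L_1$ that $K'_1$ is nonempty, closed and convex, that $(K'_1,K'_2)$ is proximal with $\text{dist}(K'_1,K'_2)=\text{dist}(K_1,K_2)$, and that $T(K'_1)\subseteq K'_1$, so $K'_1\cup K'_2\in\Gamma$ and minimality of $K$ yields $K'_2=K_2$, that is, $\delta(Tx,K_2)\le\alpha\,\delta(K_1,K_2)$. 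Since $d(Tx,Ty)\le d(x,y)=\text{dist}(K_1,K_2)$, the (unique) proximal point of $Tx$ is $Ty$, and repeating the same construction starting from $y\in L_2$ gives $\delta(Ty,K_1)\le\alpha\,\delta(K_1,K_2)$; hence $Tx\in L_1$, so $T(L_1)\subseteq L_1$ and likewise $T(L_2)\subseteq L_2$. Then $(L_1,L_2)$ is proximal with $\text{dist}(L_1,L_2)=\text{dist}(A_0,B_0)$, so $L_1\cup L_2\in\Gamma$, while $\delta(L_1,L_2)\le\alpha\,\delta(K_1,K_2)<\delta(K_1,K_2)$ contradicts the minimality of $K$. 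Thus only the first case can occur, and the proof is complete.
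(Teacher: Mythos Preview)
Your proposal is correct and follows essentially the same approach as the paper's proof: reduce to $(A_0,B_0)$, apply Zorn's Lemma to the family $\Gamma$ of noncyclically $T$-invariant proximal subpairs, and in the nondiametral case build the auxiliary pairs $(L_1,L_2)$ and $(K'_1,K'_2)$ to contradict minimality. The only noticeable variation is in the diametral case $\delta(K_1,K_2)=\text{dist}(K_1,K_2)$: you use strict convexity to conclude directly that $K_1$ and $K_2$ are singletons, whereas the paper fixes an arbitrary pair $(p,q)\in K_1\times K_2$, takes $m$ the midpoint of $q$ and $Tq$ in $K_2$, and uses strict convexity on $d(m,p)=d(m,Tp)=\text{dist}(A_0,B_0)$ to force $Tp=p$ and $Tq=q$ without showing that the sets are singletons. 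Both arguments rely on the same strict-convexity idea and are equally valid.
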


\begin{proof} Proceeding as in the previous theorem, we may see that $(A_0,B_0)$ is proximal, closed, convex and nonempty. Moreover, it is also immediate that $\text{dist}(A_0,B_0)=\text{dist}(A,B)$, $T(A_0) \subseteq A_0$ and $T(B_0) \subseteq B_0$. Now let $\Gamma$ be the collection of sets $F \subseteq A_0 \cup B_0$ such that $F \cap A_0$ and $F \cap B_0$ are closed, convex and nonempty and satisfy
$T(F \cap A_0) \subseteq F \cap A_0$ and $T(F \cap B_0) \subseteq F \cap B_0$, $\text{dist}(F \cap A_0,F \cap B_0)=\text{dist}(A_0,B_0)$ and the pair $(F \cap A_0, F \cap B_0)$ is proximal. Since $A_0 \cup B_0 \in \Gamma$, $\Gamma \neq \emptyset$.

Let $\{F_\alpha\}_{\alpha \in I}$ be a decreasing chain in $\Gamma$. Following similar patterns as in the previous proof, we get that $F_0= \cap_{\alpha \in I} F_\alpha \in \Gamma$. Then, applying Zorn's Lemma, we find a minimal element $K$ in $\Gamma$.

Let $K_1=K \cap A_0$ and $K_2=K \cap B_0$. If $\delta(K_1,K_2)=\text{dist}(K_1,K_2)$, then $d(p,q)=\text{dist}(K_1,K_2)$ for every pair $(p,q)\in K_1 \times K_2$. Let $(p,q)\in K_1 \times K_2$. Since $T$ is relatively nonexpansive, $d(Tp,Tq)=\text{dist}(K_1,K_2)=\text{dist}(A_0,B_0)$. Let $m\in K_2$ be the midpoint between $q$ and $Tq$. Then, since $d(m,Tp)=d(m,p)=\text{dist}(A_0,B_0)$ and $X$ is strictly convex, we get that $Tp=p$ and $Tq=q$ with $d(p,q)=\text{dist}(A_0,B_0)$, so that the result holds.
Suppose now $\delta(K_1,K_2)>\text{dist}(K_1,K_2)$. Repeating the reasoning of the previous theorem, we may find a pair of proximal points $(x_1,x_2) \in K_1 \times K_2$ such that $$\delta(x_1,K_2) \leq \alpha \delta(K_1,K_2) \text{ and } \delta(x_2,K_1)\leq \delta(K_1,K_2).
$$
We consider now the sets $L_1 \subseteq K_1$ and $L_2 \subseteq K_2$ defined as
\begin{align*}
 L_1 = \{ x\in K_1 : \ & \delta(x,K_2) \leq \alpha \delta(K_1,K_2) \text{ and for its proximal point }  y \in K_2,\\
& \delta(y,K_1)\leq \alpha \delta(K_1,K_2)\},
\end{align*}
\begin{align*}
 L_2 = \{ y\in K_2 : \ & \delta(y,K_1) \leq \alpha \delta(K_1,K_2) \text{ and for its proximal point }  x \in K_1,\\
& \delta(x,K_2)\leq \alpha \delta(K_1,K_2)\}.
\end{align*}
Since the definition of these sets is as in Theorem \ref{generaliza}, we have that $(L_1,L_2)$ is closed, convex, nonempty, proximal and satisfies  $\text{dist}(L_1,L_2)=\text{dist}(K_1,K_2)$. To see that $T(L_1)\subseteq L_1$ and $T(L_2) \subseteq L_2$ we may follow a similar reasoning to the one considered in Theorem \ref{generaliza} where the cyclic inclusion is proved. Although we omit some technical details, we include the proof for completeness. Let $x \in L_1$ and $y\in L_2$ such that $d(x,y)=\text{dist}(L_1,L_2)$. We prove that $Tx \in L_1$. Let $z \in K_2$. Since $d(Tx,Tz)\leq d(x,z)\leq \delta(x,K_2)\leq \alpha \delta(K_1,K_2)$, we get
$$T(K_2)\subseteq B(Tx,\alpha \delta(K_1,K_2))\cap K_2:=K^\prime_2.$$
Then $K^\prime_2$ is closed, convex and nonempty. Let $K^\prime_1\subseteq K_1$ be the set
$$K^\prime_1=\{x \in K_1 : \text{ there exists }  y \in K^\prime_2 \text{ with } d(x,y)=\text{dist}(K_1,K_2)\}.$$
Then $K^\prime_2$ is closed, convex and nonempty. Moreover, $(K^\prime_1,K^\prime_2)$ is proximal and satisfies $\text{dist}(K^\prime_1,K^\prime_2)$ $=$ $\text{dist}(K_1,K_2)$ and $T(K^\prime_1)\subseteq K^\prime_1$ and $T(K^\prime_2)\subseteq K^\prime_2$. Therefore,
$K^\prime_1 \cup K^\prime_2 \in \Gamma$ and by minimality of $K$ it follows that $K_2 \subseteq B(Tx, \alpha \delta(K_1,K_2))$ and therefore $\delta(Tx,K_2) \leq \alpha \delta(K_1,K_2)$. Proceeding similarly, we may see that $\delta(Ty,K_1) \leq \alpha \delta(K_1,K_2)$. Since $Ty \in K_2$ is the proximal point of $Tx \in K_1$, we conclude that
$Tx \in L_1$ and therefore $T(L_1) \subseteq L_1$. Similarly, $T(L_2) \subseteq L_2$. As a consequence, $L_1 \cup L_2 \in \Gamma$. Since, $\delta(L_1,L_2)\leq \alpha \delta(K_1,K_2)$, we get a contradiction with the minimality of $K$.\end{proof}

As a consequence of any of the two previous results, we get Kirk's fixed point theorem in the setting of reflexive and Busemann convex metric spaces when $\text{dist}(A,B)=0$. Notice that, in this particular case, the fact that $(A,B)$ has proximal normal structure implies that $A\cap B$ has normal structure in the sense of Brodski and Milman \cite{brmi}.

\begin{proposition} Every closed convex pair in a uniformly convex metric space $X$ has proximal normal structure.

\end{proposition}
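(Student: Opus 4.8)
\section*{Proof proposal}

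The plan is to invoke the modulus of uniform convexity directly on each of the two sets, producing the points required by the definition as midpoints of suitably chosen pairs. Let $(H_1,H_2)$ be a closed bounded convex proximal pair with $\text{dist}(H_1,H_2)=d_0$ and $\delta(H_1,H_2)=D$, where $d_0<D$; in particular $D>0$.

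The first step is to note that neither $H_1$ nor $H_2$ can be a singleton. Indeed, if $H_1=\{p\}$, then proximality of $(H_1,H_2)$ forces $d(p,b)=d_0$ for every $b\in H_2$, whence $\delta(H_1,H_2)=d_0$, contradicting $D>d_0$; the same argument applies to $H_2$. Hence we may fix distinct points $p_1,p_2\in H_1$.

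The second, main step: let $m_1=\frac{p_1+p_2}{2}$. This midpoint exists, is unique (uniform convexity implies strict convexity, hence uniqueness of geodesics), and lies in $H_1$ by convexity. Put $\varepsilon=d(p_1,p_2)/D$, which belongs to $(0,2]$ since $0<d(p_1,p_2)\le 2D$, the upper bound coming from the triangle inequality through any point of $H_2$. For an arbitrary $z\in H_2$ we have $d(p_1,z)\le D$ and $d(p_2,z)\le D$, so uniform convexity with center $z$ and radius $D$ gives $d(m_1,z)\le(1-\delta(D,\varepsilon))D$. Since this bound is independent of $z$, taking the supremum over $z\in H_2$ yields $\delta(m_1,H_2)\le(1-\delta(D,\varepsilon))D<D=\delta(H_1,H_2)$. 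The same reasoning applied to a pair of distinct points of $H_2$ produces $m_2\in H_2$ with $\delta(m_2,H_1)<\delta(H_1,H_2)$; then $(m_1,m_2)$ is the pair demanded by the definition, so $(K_1,K_2)$ has proximal normal structure.

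I do not expect a genuine obstacle here: the proof is short, and the only delicate point is checking that the modulus may legitimately be applied — one needs $D>0$ (from $D>d_0\ge 0$), $\varepsilon>0$ (which is precisely why the first step is needed, to rule out a one-point $H_i$) and $\varepsilon\le 2$ (automatic). Observe also that the remaining hypotheses in the definition of proximal normal structure, namely $\text{dist}(H_1,H_2)=\text{dist}(K_1,K_2)$ and the closedness/convexity of $K_1,K_2$, are not used; only the properties of $(H_1,H_2)$ enter.
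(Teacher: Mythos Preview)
Your argument is correct and follows essentially the same route as the paper: pick two distinct points in $H_1$, take their midpoint, and apply the modulus of uniform convexity with center an arbitrary point of $H_2$ and radius $D=\delta(H_1,H_2)$; then do the same in $H_2$. The only minor difference is in how the existence of two distinct points in $H_2$ is obtained: you argue directly that a singleton $H_2$ would force $\delta(H_1,H_2)=d_0$ via proximality, while the paper takes the proximal points $x',y'\in H_2$ of the chosen $x,y\in H_1$ and uses strict convexity (uniqueness of nearest points in convex sets) to conclude $x'\ne y'$; it then uses the common value $\varepsilon=\min\{d(x,y),d(x',y')\}$ for both estimates. Either way the conclusion is the same.
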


\begin{proof} Let $(H_1,H_2)$ be a closed convex bounded proximal pair in $X$ with $\delta(H_1,H_2)$ $>$ $\text{dist}(H_1,H_2)$. Let $x,y\in H_1$ such that $d(x,y)>0$. Consider the points $x^\prime,y^\prime \in H_2$ such that $d(x,x^\prime)=d(y,y^\prime)=\text{dist}(H_1,H_2)$. Let $m=\frac{x+y}{2}\in H_1$ and $m^\prime=\frac{x^\prime+y^\prime}{2}\in H_2$. Since $X$ is strictly convex, $d(x^\prime,y^\prime)>0$. Let $\varepsilon = \min \{d(x,y), d(x^\prime,y^\prime)\}$ and $z \in H_2$. Denote by $\delta_X$ a modulus of uniform convexity of $X$. Then,
$$
d(z,m)\leq \bigg{(}1-\delta_X\bigg{(}\delta(H_1,H_2),\frac{\varepsilon}{\delta(H_1,H_2)}\bigg{)}\bigg{)} \delta(H_1,H_2).
$$
Similarly, if we take $z\in H_1$, we get
$$d(z,m^\prime)\leq \bigg{(}1-\delta_X\bigg{(}\delta(H_1,H_2),\frac{\varepsilon}{\delta(H_1,H_2)}\bigg{)}\bigg{)} \delta(H_1,H_2).
$$
Thus, $\delta(m,H_2) \leq \alpha \delta(H_1,H_2)$ and $\delta(m^\prime,H_1) \leq \alpha \delta(H_1,H_2)$,
for $\alpha=1-\delta_X\bigg{(}\delta(H_1,H_2),\frac{\varepsilon}{\delta(H_1,H_2)}\bigg{)}$.\end{proof}

\begin{corollary} Let $(A,B)$ be a closed convex pair in a complete Busemann convex metric space $X$. Suppose that $X$ is uniformly convex with a monotone or lower
semicontinuous from the right modulus of uniform convexity and $B$ is bounded. Let $T : A \cup B \rightarrow A \cup B$ be a cyclic relatively nonexpansive mapping. Then there exists a pair $(x,y) \in A \times B$ such that $d(x,Tx)=d(y,Ty)=\emph{dist}(A,B)$.

\end{corollary}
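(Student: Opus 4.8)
The plan is to derive this corollary by combining the two structural results that precede it. First I would recall that, by the remarks in Section~2, a complete uniformly convex geodesic space whose modulus of uniform convexity is either monotone or lower semicontinuous from the right is reflexive. Since $X$ is moreover assumed to be Busemann convex, the hypotheses of Theorem~\ref{generaliza} are within reach: $X$ is reflexive and Busemann convex, and $(A,B)$ is a nonempty closed convex pair. The only mismatch with Theorem~\ref{generaliza} as literally stated is that here $B$ is assumed bounded rather than $A$; but by the Remark following Proposition~\ref{tec} (and the symmetry of the roles of $A$ and $B$ for a cyclic mapping, which simply interchanges the two sets), boundedness of either set suffices, so this causes no difficulty.

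Next I would supply the remaining hypothesis of Theorem~\ref{generaliza}, namely that $(A,B)$ has proximal normal structure. This is exactly the content of the Proposition immediately above: every closed convex pair in a uniformly convex metric space has proximal normal structure. One should check that the boundedness needed there is available — the definition of proximal normal structure quantifies only over closed bounded convex proximal subpairs $(H_1,H_2)$, so the argument of that Proposition applies verbatim to $(A,B)$ regardless of whether $A$ or $B$ itself is bounded, and we may invoke it directly.

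With reflexivity, Busemann convexity, the closed convex pair structure, boundedness of one of the sets, and proximal normal structure all in hand, I would simply apply Theorem~\ref{generaliza} to the cyclic relatively nonexpansive mapping $T$, obtaining a pair $(x,y)\in A\times B$ with $d(x,Tx)=d(y,Ty)=\mathrm{dist}(A,B)$, which is the assertion. There is essentially no obstacle here — the work is entirely in verifying that the hypotheses of the earlier theorem are met; the only point requiring a word of care is the $A$-versus-$B$ boundedness bookkeeping, handled by the Remark after Proposition~\ref{tec}. Thus the proof is short:

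\begin{proof}
Since $X$ is a complete uniformly convex metric space with a monotone or lower semicontinuous from the right modulus of uniform convexity, $X$ is reflexive. Being also Busemann convex, $X$ satisfies the standing assumptions of Theorem~\ref{generaliza}. By the preceding Proposition, the closed convex pair $(A,B)$ has proximal normal structure. Moreover, $B$ is bounded, and by the Remark following Proposition~\ref{tec} it suffices that one of the sets $A$, $B$ be bounded (the roles of $A$ and $B$ being symmetric for a cyclic mapping). Hence Theorem~\ref{generaliza} applies to the cyclic relatively nonexpansive mapping $T$, yielding a pair $(x,y)\in A\times B$ with $d(x,Tx)=d(y,Ty)=\text{dist}(A,B)$.
\end{proof}
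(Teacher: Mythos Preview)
Your proposal is correct and follows exactly the intended route: the corollary is obtained by combining the reflexivity of complete uniformly convex spaces with a monotone or lower semicontinuous from the right modulus (noted in Section~2), the preceding Proposition on proximal normal structure, and Theorem~\ref{generaliza}, with the $A$-versus-$B$ boundedness handled by the Remark after Proposition~\ref{tec}. The paper states the corollary without an explicit proof, so your write-up simply makes this implicit derivation explicit.
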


\begin{corollary}\label{coanterior} Let $(A,B)$ be a closed convex pair in a complete Busemann convex metric space $X$. Suppose that $X$ is uniformly convex with a monotone or lower
semicontinuous from the right modulus of uniform convexity and $B$ is bounded. Let $T : A \cup B \rightarrow A \cup B$ be a noncyclic relatively nonexpansive mapping. Then there exists a pair $(x,y) \in A \times B$ such that $x=Tx$, $y=Ty$ and $d(x,y)=\emph{dist}(A,B)$.
\end{corollary}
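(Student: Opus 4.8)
The plan is to deduce Corollary~\ref{coanterior} from Theorem~\ref{generaliza2} exactly as the previous corollary is deduced from Theorem~\ref{generaliza}. The hypotheses to verify are: that $X$ is reflexive, that $(A,B)$ is a closed convex pair with one of the two sets bounded, and that $(A,B)$ has proximal normal structure; once these are in place, Theorem~\ref{generaliza2} applies verbatim and yields the desired pair $(x,y)\in A\times B$ with $x=Tx$, $y=Ty$ and $d(x,y)=\text{dist}(A,B)$.

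First I would note that reflexivity is immediate from the standing assumptions: the space is complete and uniformly convex with either a monotone or a lower semicontinuous from the right modulus of uniform convexity, and it was already observed in the Preliminaries (citing \cite{leus1,esfe1}) that such a space is reflexive. Combined with Busemann convexity, this also gives that $X$ is a complete reflexive Busemann convex space with well-defined singlevalued metric projection, which is precisely the ambient setting of Theorem~\ref{generaliza2}. The boundedness hypothesis is slightly cosmetic: Theorem~\ref{generaliza2} is stated with $A$ bounded, while here $B$ is assumed bounded, but by the Remark following Proposition~\ref{tec} the roles of $A$ and $B$ may be interchanged, so nothing is lost. (Alternatively, one applies the theorem to the noncyclic mapping $T$ with the pair $(B,A)$ in place of $(A,B)$, which is legitimate since $d(Tx,Ty)\le d(x,y)$ is symmetric in the two arguments and $T(B)\subseteq B$, $T(A)\subseteq A$.)

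The only substantive point is to check that $(A,B)$ has proximal normal structure, and this is supplied directly by the Proposition immediately preceding the corollary: every closed convex pair in a uniformly convex metric space has proximal normal structure. Since $X$ is uniformly convex, $(A,B)$ qualifies, and in particular every closed bounded convex proximal subpair of $(A,B)$ with the same $\text{dist}$ and strictly larger $\delta$ admits the required pair $(x_1,x_2)$ shrinking both $\delta(\cdot,H_2)$ and $\delta(\cdot,H_1)$. This closes every hypothesis of Theorem~\ref{generaliza2}, which then produces the common fixed point pair at distance $\text{dist}(A,B)$.

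I do not anticipate a genuine obstacle here, since the corollary is a straightforward specialization; the only care needed is bookkeeping — making sure the ``$B$ bounded versus $A$ bounded'' discrepancy is handled by the symmetry Remark, and making sure the uniform-convexity-implies-proximal-normal-structure Proposition is invoked with the correct (closed, convex, bounded, proximal) subpair, noting that the bounded subpairs arising inside the proof of Theorem~\ref{generaliza2} are bounded precisely because $A_0\subseteq A$ or $B_0\subseteq B$ is bounded by Proposition~\ref{tec}.
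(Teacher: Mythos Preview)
Your proposal is correct and matches the paper's approach: the corollary is stated without proof in the paper, being an immediate consequence of Theorem~\ref{generaliza2} once one uses the Proposition that every closed convex pair in a uniformly convex space has proximal normal structure, together with the reflexivity of complete uniformly convex spaces with monotone or lower semicontinuous from the right modulus. Your handling of the $A$-versus-$B$ boundedness via the Remark after Proposition~\ref{tec} is also appropriate.
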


\begin{proposition} Let $(A,B)$ be a closed convex pair in a complete Busemann convex metric space $X$. Suppose that $X$ is uniformly convex with $\delta_X$ being a monotone or lower semicontinuous from the right modulus of uniform convexity and $B$ is bounded. Let $T : A \cup B \rightarrow A \cup B$ be a noncyclic relatively nonexpansive mapping. Let $x_0\in A_0$ and define $x_{n+1}=\frac{x_n+Tx_n}{2}$ for every $n \geq 1$. Then $\lim_n d(x_n,Tx_n)=0$.
Moreover, if $T(A)\subseteq C$, where $C$ is a compact set in $X$, then $\{x_n\}$ converges to a fixed point of $T$.
\end{proposition}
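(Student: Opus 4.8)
The plan is to obtain asymptotic regularity of $\{x_n\}$ by comparing the iterates with a genuine fixed point of $T$ lying in $B$, which is available here because Corollary \ref{coanterior} applies verbatim to $X$ and to the pair $(A,B)$. First I would check that $\{x_n\}\subseteq A_0$: indeed $x_0\in A_0$, the set $A_0$ is convex by Proposition \ref{tec} (note that $X$ is reflexive, being complete, uniformly convex with a monotone or lower semicontinuous from the right modulus), $T(A_0)\subseteq A_0$ as noted in the proof of Theorem \ref{generaliza2}, and midpoints of points of $A_0$ remain in $A_0$ since $X$ is uniquely geodesic; hence $x_{n+1}=\frac{x_n+Tx_n}{2}\in A_0$ by induction. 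By Corollary \ref{coanterior} fix $(p,q)\in A\times B$ with $Tp=p$, $Tq=q$ and $d(p,q)=\text{dist}(A,B)$; in particular $q\in B$ is a fixed point of $T$. Since the metric of $X$ is convex and $T$ is relatively nonexpansive (with $x_n\in A$ and $q\in B$),
\[
d(x_{n+1},q)\le\frac{1}{2}d(x_n,q)+\frac{1}{2}d(Tx_n,Tq)\le d(x_n,q),
\]
so $\{d(x_n,q)\}$ is non-increasing with limit $\ell\ge 0$; moreover $d(x_n,Tx_n)\le d(x_n,q)+d(Tx_n,Tq)\le 2d(x_n,q)$.

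To prove $\lim_n d(x_n,Tx_n)=0$ I would argue by contradiction. If $\ell=0$ the last inequality gives it at once, so assume $\ell>0$ and that $d(x_n,Tx_n)\ge\varepsilon$ for infinitely many $n$ and some $\varepsilon>0$. Put $R=d(x_0,q)=\sup_n d(x_n,q)$ and $\varepsilon'=\varepsilon/R$; since $d(x_n,Tx_n)\le 2d(x_n,q)\le 2R$ one has $\varepsilon'\in(0,2]$, and $d(x_n,Tx_n)\ge\varepsilon' d(x_n,q)$ for those $n$. Applying the modulus $\delta_X$ with center $q$ and radius $r_n=d(x_n,q)$ to the midpoint $x_{n+1}$ of $x_n$ and $Tx_n$ yields $d(x_{n+1},q)\le(1-\delta_X(r_n,\varepsilon'))r_n$. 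Since $r_n\downarrow\ell$, either monotonicity of $\delta_X$ gives $\delta_X(r_n,\varepsilon')\ge\delta_X(R,\varepsilon')>0$, or lower semicontinuity from the right gives $\liminf_n\delta_X(r_n,\varepsilon')\ge\delta_X(\ell,\varepsilon')>0$; in both cases there is $\delta_0>0$ with $d(x_n,q)-d(x_{n+1},q)\ge\delta_0\ell$ for infinitely many $n$, contradicting the convergence of $\{d(x_n,q)\}$. Hence $\limsup_n d(x_n,Tx_n)\le\varepsilon$ for every $\varepsilon>0$.

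For the last assertion, assume $T(A)\subseteq C$ with $C$ compact. A subsequence $Tx_{n_k}$ converges to some $z$, and since $d(x_n,Tx_n)\to 0$ we also have $x_{n_k}\to z$; then $z\in A$ ($A$ is closed) and $z\in A_0$ ($A_0$ is closed by Proposition \ref{tec}). Letting $k\to\infty$ in $d(Tx_{n_k},Ty)\le d(x_{n_k},y)$ gives $d(z,Ty)\le d(z,y)$ for all $y\in B$. Let $w\in B_0$ be the proximal point of $z$, which is unique by strict convexity. Taking $y=w$ yields $d(z,Tw)\le d(z,w)=\text{dist}(A,B)$; as $Tw\in B$, this forces $d(z,Tw)=\text{dist}(A,B)$, so $Tw=w$ by uniqueness. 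Then $d(Tz,w)=d(Tz,Tw)\le d(z,w)=\text{dist}(A,B)$ and, since $Tz\in A$, $d(Tz,w)=\text{dist}(A,B)$, whence $Tz=z$ again by uniqueness. Finally, $w\in B$ being a fixed point, the computation of the first paragraph shows $\{d(x_n,w)\}$ is non-increasing, and its limit equals $\lim_k d(x_{n_k},w)=d(z,w)=\text{dist}(A,B)$. If $x_n\not\to z$, choose $\varepsilon_0>0$ and infinitely many $n$ with $d(x_n,z)\ge\varepsilon_0$; applying $\delta_X$ with center $w$, radius $d(x_n,w)$ and the midpoint $\frac{x_n+z}{2}\in A$ (so that $d(\frac{x_n+z}{2},w)\ge\text{dist}(A,B)$), exactly as before, one obtains $\text{dist}(A,B)\le(1-\delta_0')\text{dist}(A,B)$ for some $\delta_0'>0$, which is impossible unless $\text{dist}(A,B)=0$; in that case $w=z$ and $d(x_n,w)\to0$ already forces $x_n\to z$. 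Therefore $x_n\to z=Tz$.

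The main obstacle is the asymptotic regularity step, i.e. $\lim_n d(x_n,Tx_n)=0$: since a relatively nonexpansive $T$ need not be nonexpansive on $A_0$, the usual Krasnoselskii--Mann argument does not apply, and the key is to replace the missing nonexpansiveness by comparison with the fixed point $q\in B$ furnished by Corollary \ref{coanterior}; once that is in place the uniform convexity estimates are routine, provided one treats the monotone and the lower semicontinuous from the right cases of $\delta_X$ separately and applies relative nonexpansiveness only between a point of $A$ and a point of $B$.
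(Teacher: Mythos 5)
Your proof is correct and follows essentially the same route as the paper: it anchors the iteration at a fixed point $q\in B_0$ supplied by Corollary \ref{coanterior}, uses convexity of the metric and relative nonexpansiveness to make $\{d(x_n,q)\}$ nonincreasing, and then derives the contradiction from uniform convexity, treating the monotone and the lower semicontinuous from the right moduli separately, exactly as in the paper's argument. The only difference is that you write out in full the compactness step, which the paper handles by citing \cite[Proposition 2.3]{elvek}; your argument there is sound, with the harmless presentational caveat that the case $\text{dist}(A,B)=0$ (where $w=z$ and $d(x_n,w)\to 0$ already gives convergence) should be dispatched before invoking the modulus at radii tending to $\text{dist}(A,B)$.
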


\begin{proof} By Corollary \ref{coanterior} we can find a point $y\in B_0$ such that $y=Ty$. Since the metric of the space is convex, we get that $\{d(y,x_n)\}$ is nonincreasing and so convergent to some $d \geq 0$.
Suppose first that $d=0$. In this case, the result is immediate since $\{Tx_n\}$ also converges to $y$.
Now we consider $d>0$. Suppose that there exists a subsequence $\{x_{n_k}\}$ of $\{x_n\}$ such that $d(x_{n_k},Tx_{n_k})\geq \varepsilon>0$ for every $k\geq 0$.

Suppose first $\delta_X$ is monotone. Let $0<\rho<\min\bigg{\{}\frac{d \delta_X(d+1,\frac{\varepsilon}{d+1})}{1-\delta_X(d+1,\frac{\varepsilon}{d+1})},1\bigg{\}}$. Then, from the uniform convexity of the space, there exists $k_0 \in \mathbb{N}$ such that
$$
d(y,x_{n_k+1})\leq \bigg{(}1-\delta_X\bigg{(}d+\rho,\frac{\varepsilon}{d+1}\bigg{)}\bigg{)} (d+\rho) \text{ for every } k \geq k_0.
$$
By the definition of $\rho$, we have $d(y,x_{n_k+1})<d$ for every $k \geq k_0$, which is a contradiction.

Suppose now $\delta_X$ is lower semicontinuous from the right. In this case, let $\varepsilon^*>0$ such that $0<\varepsilon^*<\delta_X(d,\frac{\varepsilon}{d+1})$. For such $\varepsilon^*>0$, consider $\mu(\varepsilon^*)>0$ such that $\delta_X(d,\frac{\varepsilon}{d+1})\leq \delta_X(r,\frac{\varepsilon}{d+1})+\varepsilon^*$ for every $r\in (d,d+\mu(\varepsilon^*))$. Let $0<\rho<\min\bigg{\{}\frac{d \delta_X(d,\frac{\varepsilon}{d+1})-d\varepsilon^*}{1-\delta_X(d,\frac{\varepsilon}{d+1})+\varepsilon^*},1,\mu(\varepsilon^*)\bigg{\}}$. By using the uniform convexity as before, there exists $k_0 \in \mathbb{N}$ such that
$$
d(y,x_{n_k+1})\leq \bigg{(}1-\delta_X\bigg{(}d+\rho,\frac{\varepsilon}{d+1}\bigg{)}\bigg{)} (d+\rho) \text{ for every } k \geq k_0.$$ Similarly
we get a contradiction. The rest of the proof follows similar patterns to those given in \cite[Proposition 2.3]{elvek}.\end{proof}

Next we provide a bound for the existence of approximate fixed points for the mapping $T$. Recall that having a metric space $(X,d)$, a mapping $T : X \to X$ and $\{x_n\} \subseteq X$, a mapping $\Phi : (0, \infty) \to \mathbb{N}$ is called an {\it approximate fixed point bound for $\{x_n\}$} (see also \cite{Koh05}) if
\[\forall \varepsilon > 0, \exists n \le \Phi(\varepsilon) \mbox{ such that } d(x_n, Tx_n) \le \varepsilon.\]
We don't include the proof of the result below since it can be obtained by following a similar reasoning as in the main result of \cite{leus}.

\begin{proposition}
Let $(A,B)$ be a closed convex pair in a complete Busemann convex metric space $X$. Suppose that $X$ is uniformly convex with $\delta_X$ being a monotone modulus of uniform convexity and $B$ is bounded. Let $T : A \cup B \to A \cup B$ be a noncyclic relatively nonexpansive mapping. Let $x_0 \in A_0$ and $b > 0$ such that there exists $y \in B_0$ with $y = Ty$ for which $d(x_0, y) \le b$. Define $x_{n+1} = \frac{x_n + Tx_n}{2}$ for every $n \ge 1$. Then $\Phi : (0, \infty) \to \mathbb{N}$,
\[\Phi(\varepsilon) = \left[\frac{2b}{\varepsilon \delta_X\left(b, \frac{\varepsilon}{b}\right)}\right]\]
is an approximate fixed point bound for $\{x_n\}$.
\end{proposition}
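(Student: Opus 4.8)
The plan is to track the quantity $d_n := d(y, x_n)$ along the iteration and show that, whenever $d(x_n, Tx_n) \ge \varepsilon$, the sequence $\{d_n\}$ decreases by a definite amount, so that it cannot stay $\ge \varepsilon$ for too many steps. First I would note that since $y = Ty$ with $y \in B_0$, relative nonexpansiveness gives $d(y, Tx_n) = d(Ty, Tx_n) \le d(y, x_n) = d_n$ for every $n$, so the closed ball $B(y, d_n)$ contains both $x_n$ and $Tx_n$; combining this with the convexity of the metric (valid since $X$ is Busemann convex) shows that $\{d_n\}$ is nonincreasing and bounded above by $d_0 \le b$. In particular $d_n \le b$ for all $n$, which will let me invoke the modulus $\delta_X$ with first argument $b$ using monotonicity.

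Next, I would run the uniform convexity estimate at the midpoint $x_{n+1} = \frac{x_n + Tx_n}{2}$. Suppose $d(x_n, Tx_n) \ge \varepsilon$. Since $d(y, x_n) \le b$ and $d(y, Tx_n) \le b$, and $d(x_n, Tx_n) \ge \varepsilon = \frac{\varepsilon}{b}\, b$, the definition of uniform convexity gives
\[
d_{n+1} = d(y, x_{n+1}) \le \left(1 - \delta_X\!\left(b, \frac{\varepsilon}{b}\right)\right) d_n \le d_n - \delta_X\!\left(b, \frac{\varepsilon}{b}\right)\varepsilon,
\]
where in the last step I use $d_n \ge d(x_n, Tx_n)/2 \ge \varepsilon/2$... actually more carefully: I would bound $\delta_X(b, \varepsilon/b)\, d_n$ from below. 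If $d_n \ge \varepsilon$ this is at least $\delta_X(b,\varepsilon/b)\varepsilon$; if $d_n < \varepsilon$ one handles it separately, but since $d(y,x_n)$ and $d(y,Tx_n)$ are both at least something and $d(x_n,Tx_n)\ge\varepsilon$ forces $d_n \ge \varepsilon/2$ by the triangle inequality, I would instead write the decrement as $d_n - d_{n+1} \ge \delta_X(b,\varepsilon/b)\, d_n \ge \delta_X(b,\varepsilon/b)\,\varepsilon/2$, so that over $k$ consecutive ``bad'' steps $d_0$ drops by at least $k\,\delta_X(b,\varepsilon/b)\,\varepsilon/2$.

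Then I would argue by contradiction: if $d(x_n, Tx_n) > \varepsilon$ for every $n \le \Phi(\varepsilon) = \left[\frac{2b}{\varepsilon\,\delta_X(b,\varepsilon/b)}\right]$, then applying the decrement estimate $\Phi(\varepsilon)+1$ times gives
\[
0 \le d_{\Phi(\varepsilon)+1} \le d_0 - (\Phi(\varepsilon)+1)\,\frac{\varepsilon\,\delta_X\!\left(b,\frac{\varepsilon}{b}\right)}{2} \le b - (\Phi(\varepsilon)+1)\,\frac{\varepsilon\,\delta_X\!\left(b,\frac{\varepsilon}{b}\right)}{2} < 0
\]
by the choice of $\Phi(\varepsilon)$, a contradiction; hence some $n \le \Phi(\varepsilon)$ satisfies $d(x_n, Tx_n) \le \varepsilon$, which is exactly the assertion that $\Phi$ is an approximate fixed point bound. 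The main obstacle I anticipate is purely bookkeeping: getting the constant in the decrement exactly right so that the floor $\left[\frac{2b}{\varepsilon\,\delta_X(b,\varepsilon/b)}\right]$ comes out, in particular handling the regime where $d_n$ is small (close to $\mathrm{dist}(A,B)$, which here is $0$ since $x_n \in A_0$ and we are measuring to $y \in B_0$ — wait, $\mathrm{dist}(A,B)$ need not be zero, so $d_n \ge \mathrm{dist}(A,B)$ and the relevant ball radii are $d_n$, still $\le b$) and making sure the uniform convexity inequality is applied with a legitimate ratio $\le 2$; since $d(x_n,Tx_n) \le d(x_n,y)+d(y,Tx_n) \le 2d_n$, the ratio $d(x_n,Tx_n)/d_n \le 2$ and $\ge \varepsilon/d_n \ge \varepsilon/b$, so monotonicity of $\delta_X$ in the radius (with the fixed ratio $\varepsilon/b$) is exactly what licenses replacing $\delta_X(d_n, \varepsilon/d_n)$ by $\delta_X(b, \varepsilon/b)$. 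I would mention that this is the same scheme as in the main result of \cite{leus}, with $y$ playing the role of the fixed point there.
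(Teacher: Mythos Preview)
Your proposal is correct and follows exactly the approach the paper intends: the paper omits the proof, stating only that it ``can be obtained by following a similar reasoning as in the main result of \cite{leus}'', and your argument is precisely that reasoning---track $d_n=d(y,x_n)$, use relative nonexpansiveness and $Ty=y$ to get $d(Tx_n,y)\le d_n$, then apply uniform convexity at the midpoint with radius $d_n\le b$ and ratio $\varepsilon/b$ (valid since $d(x_n,Tx_n)\ge\varepsilon\ge(\varepsilon/b)d_n$), invoke monotonicity of $\delta_X$ in $r$ to replace $\delta_X(d_n,\varepsilon/b)$ by $\delta_X(b,\varepsilon/b)$, and sum the decrements $d_n-d_{n+1}\ge\delta_X(b,\varepsilon/b)\,\varepsilon/2$ to force a contradiction after $\Phi(\varepsilon)+1$ steps. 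One small slip in your last paragraph: the quantity you replace is $\delta_X(d_n,\varepsilon/b)$, not $\delta_X(d_n,\varepsilon/d_n)$---the ratio is fixed at $\varepsilon/b$ throughout, and only monotonicity in the first argument is used.
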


\begin{proposition} Let $(A,B)$ be a compact convex pair in a geodesic space with convex metric. Then $(A,B)$ has proximal normal structure.
\end{proposition}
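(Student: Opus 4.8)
The plan is to mimic the classical argument that compact convex sets have normal structure, adapted to the proximal setting. Let $(H_1,H_2)\subseteq(A,B)$ be a closed bounded convex proximal pair with $\mathrm{dist}(H_1,H_2)=\mathrm{dist}(A,B)$ and $\delta(H_1,H_2)>\mathrm{dist}(H_1,H_2)=:d_0$. Since $A$ and $B$ are compact, so are $H_1$ and $H_2$ (they are closed subsets of compact sets), and hence $\delta(H_1,H_2)$ is attained: there exist $u\in H_1$, $v\in H_2$ with $d(u,v)=\delta(H_1,H_2)$. The idea is to build, from $u$ and $v$ together with their proximal partners, a pair of midpoints $(x_1,x_2)\in H_1\times H_2$ that is "diametrally internal" in the sense required by proximal normal structure.

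First I would use proximality of $(H_1,H_2)$ to pick $u'\in H_1$ and $v'\in H_2$ with $d(u,v')=d(u',v)=d_0$. Set $x_1=\frac{u+u'}{2}\in H_1$ and $x_2=\frac{v+v'}{2}\in H_2$; these lie in $H_1$, $H_2$ by convexity. Using the convexity of the metric (which holds since the metric is convex by hypothesis) one gets $d(x_1,x_2)\le\frac12 d(u,v')+\frac12 d(u',v)=d_0$, hence $d(x_1,x_2)=d_0$, so $(x_1,x_2)$ is a pair of proximal points. Now for any $z\in H_2$, convexity of the metric gives $d(z,x_1)\le\frac12 d(z,u)+\frac12 d(z,u')$. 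The second term is at most $\delta(H_1,H_2)$; the point is to show the first term is \emph{strictly} less for the choice that matters, or rather to bound the whole expression by something strictly below $\delta(H_1,H_2)$. Here is where compactness does the work: suppose for contradiction that $\delta(x_1,H_2)=\delta(H_1,H_2)$. By compactness pick $z_0\in H_2$ with $d(z_0,x_1)=\delta(H_1,H_2)$. Then the inequality above forces $d(z_0,u)=d(z_0,u')=\delta(H_1,H_2)$, and moreover equality in the convexity inequality at the midpoint $x_1$ of $[u,u']$. In a space with convex metric this rigidity can be exploited: $z_0$ is equidistant from $u$ and $u'$ at the maximal distance, and one derives a contradiction with $d(u,v)=\delta(H_1,H_2)$ being the diameter — essentially because $u,u',z_0$ would form a configuration where $x_1$, a point of $H_1$ at the same maximal distance from $z_0$, still has $d(u,u')>0$ (as $u\ne u'$, since otherwise $d(u,v)=d(u,v')=d_0<\delta(H_1,H_2)$, contradiction), and iterating the midpoint construction between $u$ and $z_0$-realizers produces points staying at distance $\delta(H_1,H_2)$ from each other indefinitely inside a compact set, which is impossible. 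The symmetric argument gives $\delta(x_2,H_1)<\delta(H_1,H_2)$.

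The main obstacle I anticipate is making the rigidity step fully rigorous: a convex metric alone (without Busemann convexity or strict convexity) gives only the weak inequality $d(x,(1-t)y+tz)\le(1-t)d(x,y)+td(x,z)$, so one cannot immediately conclude uniqueness of geodesics or strict contraction of midpoints. The cleanest route is probably the iteration/compactness argument: define sequences of midpoints realizing the diameter and extract a convergent subsequence to reach a contradiction, rather than trying to squeeze a strict inequality out of a single application of metric convexity. An alternative, if available, is to invoke that the pair being compact forces attainment everywhere and then run the standard Brodski--Milman-type argument for normal structure on $H_1$ and $H_2$ separately, gluing via proximal partners; but one must check the proximal-partner map behaves well, which again uses compactness to get existence of partners realizing suprema. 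I would write the compactness-iteration version as the primary proof, keeping the exposition short since the scheme parallels the already-presented proof of Proposition~\ref{para} and the structure of Theorem~\ref{generaliza}.
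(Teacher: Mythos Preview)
Your instinct that the proof must ultimately go via an iteration/compactness argument is correct, and this is exactly what the paper does. But the contradiction hypothesis you set up is too weak to drive that iteration. You assume only that your \emph{particular} midpoint $x_1=\frac{u+u'}{2}$ satisfies $\delta(x_1,H_2)=\delta(H_1,H_2)$; from this you extract a single $z_0\in H_2$ with $d(z_0,u)=d(z_0,u')=\delta(H_1,H_2)$, and then a proximal partner $u''\in H_1$ of $z_0$. To continue, however, you would need the next weighted midpoint (built from $u,u',u''$) also to satisfy $\delta(\cdot,H_2)=\delta(H_1,H_2)$, and nothing in your hypothesis guarantees this. The paper instead assumes, for contradiction, that $\delta(x,H_2)=\delta(H_1,H_2)$ for \emph{every} $x\in H_1$ --- which is (up to the symmetric case in $H_2$) the genuine negation of what must be shown --- and it is precisely this global assumption that allows the construction to be iterated indefinitely.

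The second missing ingredient is the mechanism that produces a uniformly separated sequence inside one of the compact sets. The paper runs the Brodski--Milman scheme: given $x_0,\ldots,x_n\in H_1$, form the weighted average $m_{n-1,n}=\frac{1}{n+1}x_n+\frac{n}{n+1}m_{n-2,n-1}$, use the global hypothesis and compactness to pick $y_n\in H_2$ with $d(y_n,m_{n-1,n})=\delta(H_1,H_2)$, and then metric convexity forces $d(y_n,x_i)=\delta(H_1,H_2)$ for every $i\le n$. Taking $x_{n+1}\in H_1$ to be a proximal partner of $y_n$ gives $d(x_{n+1},x_i)\ge \delta(H_1,H_2)-\mathrm{dist}(H_1,H_2)>0$ for all $i\le n$, contradicting compactness of $H_1$. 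Your sketch (``points staying at distance $\delta(H_1,H_2)$ from each other'') conflates points of $H_1$ with points of $H_2$; the separation that actually yields the contradiction is the $\delta(H_1,H_2)-\mathrm{dist}(H_1,H_2)$ gap \emph{within} $H_1$, and it is produced by the proximal step, not by the midpoint step.
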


\begin{proof} %Since the proof of this result follows similar patterns to those considered in Proposition 2.2 in \cite{elvek}, we will mainly comment the different reasoning we consider here.
Let $(H_1,H_2) \subseteq (A,B)$ be a closed convex bounded and proximal pair in $X$ with $\delta(H_1,H_2)>\text{dist}(H_1,H_2)$. Suppose  $\delta(x,H_2)=\delta(H_1,H_2)$ for every $x\in H_1$. Let $x_0\in H_1$. Then there exists $y_0 \in H_2$ such that $\delta(x_0,H_2)=\delta(H_1,H_2)=d(x_0,y_0)$. Let $x_1\in H_1$ such that $d(x_1,y_0)=\text{dist}(H_1,H_2)$. Then $d(x_1,x_0)\geq d(x_0,y_0)-d(x_1,y_0)= \delta(H_1,H_2)-\text{dist}(H_1,H_2)$. Let $y_1\in H_2$ such that
$$d\bigg{(}\frac{x_1+x_0}{2},y_1\bigg{)}=\delta(H_1,H_2).$$
Since the metric is convex, the fact that
\[\delta(H_1,H_2)=d(y_1,\frac{x_1+x_0}{2})\leq \frac{1}{2} (d(y_1,x_0)+d(y_1,x_1))\]
implies $d(y_1,x_0)=d(y_1,x_1)=\delta(H_1,H_2)$. Let $m_{0,1}=\frac{x_1+x_0}{2}$. Take $x_2 \in H_1$ such that $d(y_1,x_2)=\text{dist}(H_1,H_2)$ and $y_2 \in H_2$ such that $d(y_2, \frac{1}{3}x_2+\frac{2}{3}m_{0,1})=\delta(H_1,H_2)$. Let $m_{1,2}=\frac{1}{3}x_2+\frac{2}{3}m_{0,1}$. By using again the convexity of the metric,
we obtain $\delta(H_1,H_2)=d(y_2,x_2)=d(y_2,m_{0,1})=d(y_1,m_{0,1})$. Moreover, this last equality also implies
$$d(y_2,x_0)=d(y_2,x_1)=\delta(H_1,H_2).$$
Suppose we have $\{x_1,\ldots,x_n\}$ in $H_1$, $\{m_{0,1}, m_{1,2},\ldots, m_{n-1,n}\}$ in $H_1$, where
$$m_{i-1,i} = \frac{1}{i+1} x_i + \frac{i}{i+1} m_{i-2,i-1}$$ for every $i\geq 2$, and $\{y_1,\ldots,y_{n-1}\}$ in $H_2$ such that
$$d(x_{i+1},y_i)=\text{dist}(H_1,H_2) \text{ for every } i=1\ldots n-1,
$$
$$
d(y_i,m_{i-1,i})=\delta(H_1,H_2) \text{ for every } i=1\ldots n-1  $$
and
$$d(y_i,x_j)=\delta(H_1,H_2) \text{ for every } i=0\ldots n-1 \text{ and } 0\leq j\leq i.
$$
Now consider the point $y_n\in H_2$ such that
$$d(y_n,m_{n-1,n})=\delta(H_1,H_2).
$$
Take $x_{n+1}\in H_1$ such that $$d(x_{n+1},y_n)=\text{dist}(H_1,H_2)
.$$
By using again the convexity of the metric, we may see that
$$d(y_n,x_i)=\delta(H_1,H_2) \text{ for every } i=0\ldots n. $$
As a consequence,
$$d(x_{n+1},x_i)\geq d(x_i,y_n)-d(x_{n+1},y_n)=\delta(H_1,H_2)-\text{dist}(H_1,H_2)$$
for every $n\in \mathbb{N}$ and for every $i=1\ldots n.$
Finally, by considering a convergent and, therefore, Cauchy subsequence of $\{x_n\}$ we get a contradiction. \end{proof}

\section{CAT($0$) spaces}

\begin{proposition} Let $(A,B)$ be a closed convex pair in a CAT(0) space. Consider the mapping $P : A \cup B \rightarrow A \cup B$ defined as
$$P(x) = \left\{
\begin{array}{c l}
 P_B(x)  & x\in A,\\
P_A(x) & x\in B.
\end{array}
\right.
$$
Then $P$ is a cyclic relatively nonexpansive mapping.
\end{proposition}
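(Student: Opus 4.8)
The plan is to check the two defining properties of a cyclic relatively nonexpansive mapping separately: that $P$ maps $A$ into $B$ and $B$ into $A$, and that $d(Px,Py)\le d(x,y)$ for every $x\in A$ and $y\in B$. The first property is immediate once we know that $P$ is a genuine (single-valued) mapping: by Proposition \ref{projection} the metric projections $P_A$ and $P_B$ onto the closed convex sets $A$ and $B$ are well-defined and single-valued, and then by construction $P(A)=P_B(A)\subseteq B$ and $P(B)=P_A(B)\subseteq A$, so $P$ is cyclic.

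For the nonexpansiveness I would fix $x\in A$, $y\in B$ and set $u:=Px=P_B(x)\in B$ and $v:=Py=P_A(y)\in A$; the goal is $d(u,v)\le d(x,y)$. The key ingredient is the standard ``obtuse-angle'' estimate for projections in CAT($0$) spaces: if $C\subseteq X$ is closed and convex, $z\in X$ and $w\in C$, then
$$d(z,w)^2\ge d(z,P_C(z))^2+d(P_C(z),w)^2.$$
When $z\in C$ or $w=P_C(z)$ this is trivial; otherwise it follows from Proposition \ref{projection}(3), which gives that the angle at $P_C(z)$ between $z$ and $w$ is at least $\pi/2$, by passing to a Euclidean comparison triangle for $\triangle(z,P_C(z),w)$ and applying the law of cosines (the comparison angle dominates the Alexandrov angle, hence is at least $\pi/2$, so its cosine is nonpositive). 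Applying this with $C=B$, $z=x$, $w=y$ yields $d(x,y)^2\ge d(x,u)^2+d(u,y)^2$, and with $C=A$, $z=y$, $w=x$ yields $d(x,y)^2\ge d(y,v)^2+d(v,x)^2$.

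Finally I would invoke the Berg--Nikolaev four-point characterization (Theorem \ref{charact}), applied to the four points $u,x,v,y$ in the roles of $x,y,z,p$ respectively, which gives
$$d(u,v)^2+d(x,y)^2\le d(u,x)^2+d(x,v)^2+d(v,y)^2+d(y,u)^2.$$
Grouping the right-hand side as $\big(d(u,x)^2+d(y,u)^2\big)+\big(d(x,v)^2+d(v,y)^2\big)$ and substituting the two inequalities obtained above bounds it by $d(x,y)^2+d(x,y)^2$, so $d(u,v)^2\le d(x,y)^2$, i.e.\ $d(u,v)\le d(x,y)$, as required.

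The inclusions and the degenerate cases of the projection estimate are routine; the one step that needs genuine care is picking the right assignment of the four points in Theorem \ref{charact} so that the two projection inequalities exactly absorb the spurious $d(x,y)^2$ term. This is really the heart of the argument: in a Hilbert space one would get the same conclusion from the two variational inequalities characterizing $P_B(x)$ and $P_A(y)$, and the four-point condition is precisely the substitute that makes that computation go through in the CAT($0$) setting.
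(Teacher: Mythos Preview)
Your proof is correct and follows essentially the same approach as the paper: both arguments derive the two Pythagorean-type inequalities $d(x,y)^2\ge d(x,P_Bx)^2+d(P_Bx,y)^2$ and $d(x,y)^2\ge d(y,P_Ay)^2+d(P_Ay,x)^2$ from the obtuse-angle property of the projection (Proposition \ref{projection}(3) and the cosine law), and then combine these with the Berg--Nikolaev four-point inequality applied to $\{x,P_Bx,y,P_Ay\}$ to obtain $d(P_Bx,P_Ay)^2+d(x,y)^2\le 2d(x,y)^2$. Your assignment of roles in Theorem \ref{charact} is exactly the one the paper uses (up to relabeling), so there is no substantive difference.
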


\begin{proof} The fact that $P$ is cyclic is immediate. Let $x\in A$ and $y\in B$. For simplicity, denote $a=d(x,P_Ay)$, $b=d(y,P_Bx)$, $c=d(y,P_Ay)$, $e=d(x,P_Bx)$, $h=d(x,y)$ and $r=d(P_Ay,P_Bx)$. Next we prove that $r\leq h$.

Let $\alpha = \angle_{P_Bx}(x,y)$ and $\beta=\angle_{P_Ay}(x,y)$. By Proposition \ref{projection}, $\cos \alpha, \cos \beta \leq 0$. By the Cosine Law in CAT($0$) spaces, we get
$$d(x,y)^2=h^2\geq e^2+b^2 \text{ and } d(x,y)^2=h^2\geq a^2+c^2.
$$
If we apply Theorem \ref{charact} to the four points $\{x,P_Bx, y, P_Ay\}$ and consider the two previous inequalities, we obtain
$$
h^2+r^2\leq a^2+b^2+c^2+e^2\leq 2 h^2,
$$ and the result follows. \end{proof}

As a consequence of the previous result, we may reason as in \cite{elvek} to conclude that in the setting of CAT($0$) spaces Theorem \ref{generaliza2} is a consequence of Theorem \ref{generaliza}.

Since every pair of closed and convex sets in a CAT($0$) space satisfies property $UC$ (see \cite{esfe2} for more details on this property), we may assert that every noncyclic or cyclic relatively nonexpansive mapping is also continuous if the pair $(A,B)$ is in addition proximal. Next we see that, as it happens in Hilbert spaces \cite[Proposition 3.2]{elvek}, a noncyclic relatively nonexpansive mapping is even nonexpansive if the pair $(A,B)$ is proximal.

\begin{proposition} Let $(A,B)$ be a closed convex bounded proximal pair of sets in a CAT(0) space. Let $T : A \cup B \rightarrow A \cup B$ be a noncyclic relatively nonexpansive mapping. Then $T$ is nonexpansive.

\end{proposition}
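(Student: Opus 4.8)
The plan is to reduce the statement to the mixed case $x\in A,\ y\in B$, which is exactly relative nonexpansiveness, by proving that $T$ is nonexpansive on $A$ (the argument on $B$ being symmetric). We may assume $\text{dist}(A,B)>0$, since otherwise proximality forces $A=B$ and the claim is immediate. So fix $x_1,x_2\in A$ with $x_1\neq x_2$ and set $\delta=\text{dist}(A,B)$. Since $(A,B)$ is proximal, each $x_i$ has a proximal point $u_i\in B$, and by strict convexity of $X$ it is unique, so $u_i=P_B(x_i)$, $x_i=P_A(u_i)$, and $u_1\neq u_2$. As $T$ is noncyclic, $Tx_i\in A$ and $Tu_i\in B$, and from $\delta\le d(Tx_i,Tu_i)\le d(x_i,u_i)=\delta$ we get $d(Tx_i,Tu_i)=\delta$, hence $Tu_i=P_B(Tx_i)$ and $Tx_i=P_A(Tu_i)$.

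Next I would record two ``parallelogram'' identities: $d(x_1,x_2)=d(u_1,u_2)$ and $d(Tx_1,Tx_2)=d(Tu_1,Tu_2)$. For the first, convexity of the metric gives $d\!\left(\frac{x_1+x_2}{2},\frac{u_1+u_2}{2}\right)\le\frac{1}{2}d(x_1,u_1)+\frac{1}{2}d(x_2,u_2)=\delta$, and since the two midpoints lie in $A$ and $B$ this is an equality; hence $d(x_1,u_1)=d\!\left(\frac{x_1+x_2}{2},\frac{u_1+u_2}{2}\right)=d(x_2,u_2)$, i.e.\ $[x_1,x_2] \| [u_1,u_2]$, and Proposition \ref{para} yields $[x_1,u_1] \| [x_2,u_2]$, which in particular gives $d(x_1,x_2)=d(u_1,u_2)$. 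The same computation applied to $Tx_1,Tx_2$, whose proximal points are $Tu_1,Tu_2$, gives $d(Tx_1,Tx_2)=d(Tu_1,Tu_2)$.

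The crucial step is to show the ``rectangle'' $x_1,u_1,u_2,x_2$ is flat, namely $d(x_1,u_2)^2=\delta^2+d(x_1,x_2)^2$ (and likewise with $x_1,u_2$ replaced by $x_2,u_1$). Applying the Berg--Nikolaev four point inequality (Theorem \ref{charact}) to $x_1,x_2,u_2,u_1$, and using $d(x_i,u_i)=\delta$ and $d(u_1,u_2)=d(x_1,x_2)$, yields $d(x_1,u_2)^2+d(x_2,u_1)^2\le 2\delta^2+2d(x_1,x_2)^2$. On the other hand, since $u_1=P_B(x_1)$ and $u_2\in B$ with $u_2\neq u_1$, the Cosine Law in CAT($0$) spaces together with Proposition \ref{projection} gives $d(x_1,u_2)^2\ge d(x_1,u_1)^2+d(u_1,u_2)^2=\delta^2+d(x_1,x_2)^2$, and symmetrically $d(x_2,u_1)^2\ge\delta^2+d(x_1,x_2)^2$. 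Adding these two lower bounds matches the upper bound, so both are equalities; in particular $d(x_1,u_2)^2=\delta^2+d(x_1,x_2)^2$.

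To finish, apply relative nonexpansiveness to $x_1\in A$ and $u_2\in B$: $d(Tx_1,Tu_2)\le d(x_1,u_2)$, hence $d(Tx_1,Tu_2)^2\le\delta^2+d(x_1,x_2)^2$. Since $Tu_1=P_B(Tx_1)$ and $Tu_2\in B$, the Cosine Law again gives $d(Tx_1,Tu_2)^2\ge d(Tx_1,Tu_1)^2+d(Tu_1,Tu_2)^2=\delta^2+d(Tu_1,Tu_2)^2$. Comparing, $d(Tu_1,Tu_2)\le d(x_1,x_2)$, and therefore $d(Tx_1,Tx_2)=d(Tu_1,Tu_2)\le d(x_1,x_2)$. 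Running the symmetric argument on $B$ and invoking relative nonexpansiveness for mixed pairs, we conclude $d(Tx,Ty)\le d(x,y)$ for all $x,y\in A\cup B$. I expect the flatness step of the third paragraph to be the main obstacle: the projection-angle inequalities alone point in the wrong direction, and it is only the pinching against the CAT($0$) four point inequality that produces the Euclidean identity making the final chain of estimates close.
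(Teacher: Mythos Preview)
Your proof is correct and uses the same toolkit as the paper: the equalities $d(x_1,x_2)=d(P_Bx_1,P_Bx_2)$ and $TP_Bx_i=P_BTx_i$, the Cosine Law at the projection point together with Proposition~\ref{projection}, the Berg--Nikolaev four-point inequality, and relative nonexpansiveness applied to the mixed pair $(x_1,P_Bx_2)$. The difference is only in how these are assembled. You first pinch the Cosine Law lower bound against the four-point upper bound on the source quadruple $x_1,x_2,u_2,u_1$ to force the exact equality $d(x_1,u_2)^2=\delta^2+d(x_1,x_2)^2$, and then run the Cosine Law on the image. The paper skips the flatness step: it applies the Cosine Law directly on the image triangles to get $d(Tx_1,Tx_2)^2+\delta^2\le d(Tx_i,TP_Bx_j)^2\le d_i^2$, and then bounds $\min\{d_1^2,d_2^2\}\le(d_1^2+d_2^2)/2\le d(x_1,x_2)^2+\delta^2$ by the four-point inequality alone. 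Your route extracts a bit more geometric information (the source rectangle is Euclidean) at the cost of one extra application of the Cosine Law; the paper's chain of inequalities is marginally shorter but otherwise equivalent. One small remark: in your final step you should note separately the trivial case $Tu_1=Tu_2$, since the projection-angle inequality needs $Tu_2\neq P_BTx_1$.
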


\begin{proof} Let $x_1,x_2 \in A$. Let $d_1=d(x_1,P_Bx_2)$, $d_2=d(x_2,P_Bx_1)$,
\[\alpha = \angle_{P_BTx_1}(Tx_1,TP_Bx_2) \quad \text{and} \quad \beta=\angle_{P_BTx_2}(Tx_2,TP_Bx_1).\]
Note that $d(x_1,x_2)=d(P_Bx_1,P_Bx_2)$, $d(Tx_1,Tx_2)=d(P_BTx_1,P_BTx_2)$, $d(x_1,P_Bx_1)=d(x_2,P_Bx_2)$ $=\text{dist}(A,B)$ and $\alpha, \beta \geq \pi/2$.

Let $\triangle_1=\triangle(TP_Bx_2,TP_Bx_1,Tx_1)$ and $\triangle_2=\triangle(TP_Bx_1,TP_Bx_2,Tx_2)$. Since $d(Tx_1, TP_Bx_1) \le d(x_1, P_Bx_1)$ it follows by Proposition \ref{projection}, ($1$) that $TP_Bx_1 = P_BTx_1$. Similarly, $TP_Bx_2 = P_BTx_2$. If we apply the Cosine Law in CAT($0$) spaces to
$\triangle_1$ and $\triangle_2$, we obtain
\begin{align*}
d(Tx_1,Tx_2)^2+\text{dist}(A,B)^2 & = d(P_BTx_1,P_BTx_2)^2+ \text{dist}(A,B)^2 \\
& \leq d(Tx_1,TP_B x_2)^2 \leq d_1^2
\end{align*}
and
\begin{align*}
d(Tx_1,Tx_2)^2+\text{dist}(A,B)^2 & = d(P_BTx_1,P_BTx_2)^2+ \text{dist}(A,B)^2 \\
& \leq d(Tx_2,TP_B x_1)^2\leq d_2^2.
\end{align*}
Thus,
\begin{equation}\label{2}
d(Tx_1,Tx_2)^2+\text{dist}(A,B)^2 \leq \min\{d_1^2, d_2^2\}.
\end{equation}

If we apply Theorem \ref{charact} to the four points $\{x_1,P_Bx_1, x_2, P_Bx_2\}$, we obtain

\begin{equation}\label{3}
2 \min\{d_1^2,d_2^2\} \leq d_1^2+d_2^2\leq 2(d(x_1,x_2)^2+\text{dist}(A,B)^2).
\end{equation}

By using (\ref{2}) and (\ref{3}), we get that $T$ is nonexpansive on $A$. In a similar way, we get that $T$ is nonexpansive on $B$ and then the result holds.\end{proof}

Next we see that a similar result also holds for cyclic relatively nonexpansive mappings.

\begin{proposition} Let $(A,B)$ be a closed convex bounded proximal pair of sets in a CAT(0) space. Let $T : A \cup B \rightarrow A \cup B$ be a cyclic relatively nonexpansive mapping. Then $T$ is nonexpansive.

\end{proposition}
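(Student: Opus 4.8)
The plan is to adapt, \emph{mutatis mutandis}, the proof of the previous proposition (the noncyclic case), the only change being that a cyclic mapping sends $A$ into $B$ and $B$ into $A$ rather than each set into itself. First, dispose of the case $\text{dist}(A,B)=0$: proximality of $(A,B)$ then forces $A=B$, the cyclic inclusions give $T(A)\subseteq A$, and the relative nonexpansiveness hypothesis says exactly that $T$ is nonexpansive. So assume $\text{dist}(A,B)>0$. We also record the fact (already used in the noncyclic proof) that, since $(A,B)$ is proximal, the restrictions $P_B|_A\colon A\to B$ and $P_A|_B\colon B\to A$ are mutually inverse isometries: this follows from Proposition \ref{projection} and the uniqueness of the metric projection, noting that for a proximal point $b$ of $a$ one has $b=P_B(a)$, $a=P_A(b)$ and $d(a,b)=\text{dist}(A,B)$. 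In particular $d(P_Bu_1,P_Bu_2)=d(u_1,u_2)$ for $u_1,u_2\in A$ and $d(P_Av_1,P_Av_2)=d(v_1,v_2)$ for $v_1,v_2\in B$.

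Fix $x_1,x_2\in A$; the goal is $d(Tx_1,Tx_2)\le d(x_1,x_2)$. Since $T$ is cyclic, $Tx_1,Tx_2\in B$ and $TP_Bx_1,TP_Bx_2\in A$. As $x_i\in A$, $P_Bx_i\in B$ and $T$ is relatively nonexpansive, $d(Tx_i,TP_Bx_i)\le d(x_i,P_Bx_i)=\text{dist}(A,B)$; since $Tx_i\in B$ and $TP_Bx_i\in A$ the reverse inequality is automatic, so equality holds and, by uniqueness of the projection of $Tx_i\in B$ onto $A$, $TP_Bx_i=P_A(Tx_i)$. If $TP_Bx_1=TP_Bx_2$, then by the isometry above $d(Tx_1,Tx_2)=d(P_ATx_1,P_ATx_2)=0$ and we are done; so assume these two points are distinct, which also makes the comparison angles below well defined.

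Put $d_1=d(x_1,P_Bx_2)$ and $d_2=d(x_2,P_Bx_1)$. By Proposition \ref{projection}(3), the angle at $P_A(Tx_1)$ in the triangle $\triangle(P_A(Tx_1),Tx_1,TP_Bx_2)$ is at least $\pi/2$, and likewise the angle at $P_A(Tx_2)$ in $\triangle(P_A(Tx_2),Tx_2,TP_Bx_1)$. Applying the Cosine Law in CAT($0$) spaces to these two triangles, and using $d(Tx_i,P_A(Tx_i))=\text{dist}(A,B)$, $d(P_ATx_1,P_ATx_2)=d(Tx_1,Tx_2)$, $d(Tx_1,TP_Bx_2)\le d(x_1,P_Bx_2)=d_1$ and $d(Tx_2,TP_Bx_1)\le d(x_2,P_Bx_1)=d_2$, one obtains
\[
d(Tx_1,Tx_2)^2+\text{dist}(A,B)^2\le\min\{d_1^2,d_2^2\}.
\]
On the other hand, applying Theorem \ref{charact} to $\{x_1,P_Bx_1,x_2,P_Bx_2\}$ and using $d(x_1,x_2)=d(P_Bx_1,P_Bx_2)$ and $d(x_i,P_Bx_i)=\text{dist}(A,B)$ gives $2\min\{d_1^2,d_2^2\}\le d_1^2+d_2^2\le 2\big(d(x_1,x_2)^2+\text{dist}(A,B)^2\big)$. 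Combining the two displays yields $d(Tx_1,Tx_2)\le d(x_1,x_2)$, so $T$ is nonexpansive on $A$. Interchanging the roles of $A$ and $B$ (all hypotheses are symmetric) gives nonexpansiveness on $B$, and together with the defining inequality for pairs in $A\times B$ this shows $T$ is nonexpansive on $A\cup B$.

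There is no essential obstacle here; the proof is a transcription of the noncyclic argument. The only things needing attention are bookkeeping --- because $T$ is cyclic, the projection identity reads $TP_Bx_i=P_A(Tx_i)$, and the isometry used to rewrite $d(Tx_1,Tx_2)$ is $P_A|_B$ (not $P_B|_A$) since $Tx_1,Tx_2\in B$ --- and the quick removal, at the start, of the degenerate configurations ($\text{dist}(A,B)=0$, or $TP_Bx_1=TP_Bx_2$) in which the angle comparisons would be vacuous.
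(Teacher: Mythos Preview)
Your proof is correct and follows essentially the same approach as the paper: establishing $TP_Bx_i=P_A(Tx_i)$, applying the Cosine Law in the triangles $\triangle(TP_Bx_1,TP_Bx_2,Tx_i)$, and then Theorem~\ref{charact} to the quadrilateral $\{x_1,P_Bx_1,x_2,P_Bx_2\}$. You are in fact slightly more careful than the paper, handling the degenerate configurations $\text{dist}(A,B)=0$ and $TP_Bx_1=TP_Bx_2$ explicitly and spelling out the isometry $d(P_ATx_1,P_ATx_2)=d(Tx_1,Tx_2)$ that the paper simply asserts.
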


\begin{proof}  Let $x_1,x_2 \in A$. Let $d_1=d(x_1,P_Bx_2)$, $d_2=d(x_2,P_Bx_1)$
\[\alpha = \angle_{P_ATx_1}(Tx_1,TP_Bx_2) \quad \text{and} \quad \beta=\angle_{P_ATx_2}(Tx_2,TP_Bx_1).\]
Note that $d(x_1,x_2)=d(P_Bx_1,P_Bx_2)$, $d(Tx_1,Tx_2)=d(P_BTx_1,P_BTx_2)$, $d(x_1,P_Bx_1)=d(x_2,P_Bx_2)$ $=\text{dist}(A,B)$ and $\alpha, \beta \geq \pi/2$.

Let $\triangle_1=\triangle(TP_Bx_2,TP_Bx_1,Tx_1)$ and $\triangle_2=\triangle(TP_Bx_1,TP_Bx_2,Tx_2)$. By Proposition \ref{projection}, ($1$), we get that $TP_Bx_1=P_ATx_1$ and $TP_Bx_2=P_ATx_2$. By applying now the Cosine Law to
$\triangle_1$ and $\triangle_2$ and Theorem \ref{charact} to $\{x_1,P_Bx_1, x_2, P_Bx_2\}$ as in the previous theorem we get the result. \end{proof}

\section{Proximal normal structure: a sufficient but not necessary condition}

In 1979, Karlovitz \cite{kar} proved that the normal structure of the domain of a nonexpansive self-mapping $T$ is a sufficient but not necessary condition to guarantee existence of fixed points of such a mapping in the context of reflexive Banach spaces. For this aim, a very specific family of reflexive spaces which originated with R. C. James was considered. In the same setting we see now that the proximal normal structure behaves similarly with respect to relatively nonexpansive mappings. First we give an example of a closed convex bounded pair of sets in a reflexive Banach space that does not have proximal normal structure.

\begin{example}

Let $X$ denote the Banach space given by the set $\ell_2$ endowed with the norm
$$\|x\|=\max\{\|x\|_\infty, \|x\|_2/ \sqrt{2}\}.$$
Let
$$
A=B(\theta,1) \cap \{x = \{x_n\}_{n\geq1} \in X : x_1 = 1, x_i\geq 0  \text{ for every } i\geq1\}$$ and
$$B=B(\theta,2) \cap \{x = \{x_n\}_{n\geq1}  \in X : x_1 = 2, x_i\geq 0 \text{ for every } i\geq1\},
$$where $\theta$ denotes the origin of the space $\ell_2$.
It is easy to see that $(A,B)$ is closed, convex, bounded and proximal. From the definition we have $\|x-y\|\geq 1$ for every $x\in A$ and $y\in B$. Notice that $e_1\in A$, $2e_1\in B$ and $\|e_1-2e_1\|=1$. Then $\mbox{dist}(A,B) = 1$. We claim $\delta(A,B)=2$. Let $x=\{x_i\} \in A$ and $y=\{y_i\} \in B$. Since $x\in A$, $0\leq x_i \leq 1$ for every $i\geq 1$. Equally, $y\in B$ implies $0\leq y_i \leq 2$. Thus, $\|x-y\|_\infty \leq 2$. Note that $\|x-y\|_2^2=\sum_{i=1}^\infty x_i^2 + \sum_{i=1}^\infty y_i^2-2(x_1 y_1)-2 \sum_{i=2}^\infty x_i y_i \leq 6$. Then $\|x-y\|_2/ \sqrt{2}\leq \sqrt{3} < 2$ and therefore $\|x-y\| \leq 2$. It is easy to see that $\{e_1+e_n\}_{n\geq 2} \subseteq A$ and $\{2e_1+2e_n\}_{n \geq 2} \subseteq B$. Since $d(e_1+e_n, 2e_1+2e_m)=2$ for $n\neq m$, we have $\delta(A,B)=2$. On the other hand, we have $\delta(x,B)=2$ for every $x=\{x_i\}_{i\geq1} \in A$ . This is a consequence of
$$\lim_{n\rightarrow \infty} d(x,2e_1+2e_n) \geq \lim_{n \rightarrow \infty}|x_n-2|=2.$$
Then $(A,B)$ does not have proximal normal structure.

\end{example}

\begin{remark} Let $A^*= \overline{\mbox{conv}}(\{e_1+e_n : n\geq 2\})$ and $B^*= \overline{\mbox{conv}}(\{2e_1+2e_n : n\geq 2\})$. Note that the pair $(A^*,B^*) \subseteq (A,B)$ does not have proximal structure either.
\end{remark}

Next we see that proximal normal structure is a sufficient but not necessary condition to obtain the existence of best proximity points in Theorem 2.1 in \cite{elvek} and therefore also in Theorem \ref{generaliza}.

\begin{proposition} Let $(A,B)$ be the pair of sets considered in the previous example and suppose $T : A \cup B \rightarrow A \cup B$ is a cyclic relatively nonexpansive mapping. Then there exists a pair $(x,y) \in A \times B$ such that $d(x,Tx)=d(y,Ty)=\emph{dist}(A,B)$.

\end{proposition}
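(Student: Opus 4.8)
The plan is to follow the scheme of the proof of Theorem~\ref{generaliza} (equivalently, of Theorem~2.1 in \cite{elvek}) and to replace the single step that invokes proximal normal structure by an argument tailored to the James norm, in the spirit of Karlovitz~\cite{kar}.

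First I would record that $X$ is a reflexive Banach space and that $(A,B)$ is closed, convex, bounded and proximal with $\text{dist}(A,B)=1$. As in the earlier proofs, $A_0$ and $B_0$ are then closed, convex, bounded, nonempty and proximal, $\text{dist}(A_0,B_0)=1$, $T(A_0)\subseteq B_0$ and $T(B_0)\subseteq A_0$; in fact here $A_0=A$ and $B_0=B$, since for $x\in A$ the point $y$ with $y_1=2$ and $y_i=x_i$ for $i\ge 2$ lies in $B$ with $d(x,y)=1$, and for $y\in B$ the point $x$ with $x_1=1$ and $x_i=y_i/2$ lies in $A$ with $d(x,y)=1$. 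Then I would apply Zorn's Lemma to the family $\Gamma$ of closed convex nonempty proximal pairs $(F_1,F_2)\subseteq(A_0,B_0)$ that are cyclically $T$-invariant and satisfy $\text{dist}(F_1,F_2)=1$: a decreasing chain $(F_\alpha)_{\alpha\in I}$ has the intersection $F_0=\bigcap_\alpha F_\alpha$ as a lower bound in $\Gamma$, because $F_0\cap A_0$ and $F_0\cap B_0$ are closed, convex and nonempty by reflexivity, cyclic invariance passes to intersections, and for $p\in F_0\cap A_0$ the sets $\{q\in F_\alpha\cap B_0:d(p,q)=1\}$ form a decreasing chain of closed convex bounded nonempty sets whose intersection supplies a proximal point of $p$ in $F_0\cap B_0$, exactly as in the proof of Theorem~\ref{generaliza}. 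Let $(K_1,K_2)$ be a minimal element of $\Gamma$.

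If $\delta(K_1,K_2)=1$, then $d(x,y)=1$ for every $(x,y)\in K_1\times K_2$; since $T$ is cyclic, $Tx\in K_2$ for $x\in K_1$, hence $d(x,Tx)=1=\text{dist}(A,B)$, and taking $y=Tx\in K_2$ we also get $Ty\in K_1$ and $d(y,Ty)=1$, which is the desired pair. It therefore remains to rule out $\delta(K_1,K_2)>1$, and this is where the special structure of $(A,B)$ enters. Following Karlovitz, I would first prove a proximal analogue of the Goebel--Karlovitz lemma for the minimal pair: there exist proximal pairs $(x_n,y_n)\in K_1\times K_2$ (so $d(x_n,y_n)=1$) which are asymptotically diametral, i.e. $\lim_n\delta(x_n,K_2)=\lim_n\delta(y_n,K_1)=\delta(K_1,K_2)$, obtained from an approximate invariant sequence together with the minimality of $(K_1,K_2)$, relative nonexpansiveness of $T$ playing the role that nonexpansiveness plays classically. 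Passing to subsequences, $x_n\rightharpoonup x\in K_1$ and $y_n\rightharpoonup y\in K_2$. Since weak convergence in $\ell_2$ is coordinatewise convergence and $\|\cdot\|=\max\{\|\cdot\|_\infty,\|\cdot\|_2/\sqrt2\}$, the diametral character of $\{(x_n,y_n)\}$ forces these sequences to behave, asymptotically, like the generating sequences $\{e_1+e_n\}$ of $A^{\ast}$ and $\{2e_1+2e_n\}$ of $B^{\ast}$ from the Remark; estimating the $\ell_\infty$-- and $\ell_2$--contributions of $\|x_n-y\|$, $\|x-y_n\|$ and $\|x_n-y_m\|$ separately (using $\langle x_n,y\rangle\to\langle x,y\rangle$, the identity $\|x_n-x_m\|_2^2=\|x_n\|_2^2+\|x_m\|_2^2-2\langle x_n,x_m\rangle$, and the defining constraints of $A$ and $B$) one reaches a contradiction with the simultaneous validity of $\text{dist}(K_1,K_2)=1$, $\delta(K_1,K_2)>1$ and the cyclic relative nonexpansiveness of $T$ on $K_1\cup K_2$. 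Hence $\delta(K_1,K_2)=1$, and the conclusion follows.

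I expect the main obstacle to be precisely this last part: formulating and proving the proximal Goebel--Karlovitz lemma in the present metric framework, and then carrying out the norm estimates that deliver the contradiction. The delicate feature is that the James norm is a maximum of two norms that react differently to weak limits of disjointly supported tails -- the $\ell_2$ part is weakly lower semicontinuous and senses the extra mass, while the $\ell_\infty$ part does not -- so the bookkeeping must be done with care; conceptually, however, the argument parallels Karlovitz's treatment of the James space step by step.
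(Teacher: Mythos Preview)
Your approach is far more elaborate than what the paper actually does, and its hard step is left as a sketch. The paper's proof is a two-line observation that exploits a single special point: the vertex $2e_1\in B$ satisfies $d(2e_1,a)=1$ for \emph{every} $a\in A$ (indeed $\|2e_1-a\|_\infty=1$ because the first coordinate equals $1$ and $|a_i|\le 1$, while $\|2e_1-a\|_2^2=1+\sum_{i\ge 2}a_i^2\le 2$). Hence $d(2e_1,A)=\delta(2e_1,A)=1=\text{dist}(A,B)$. Since $T$ is cyclic, $T(2e_1)\in A$, so $d(2e_1,T(2e_1))=1$; relative nonexpansiveness then gives
\[
1\le d(T(2e_1),T^2(2e_1))\le d(2e_1,T(2e_1))=1,
\]
and the pair $(x,y)=(T(2e_1),2e_1)\in A\times B$ does the job. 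No Zorn's Lemma, no minimal pair, no Goebel--Karlovitz argument is needed.

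Your route --- reducing to a minimal proximal $T$-invariant pair and then forcing $\delta(K_1,K_2)=1$ via a proximal Goebel--Karlovitz lemma plus norm estimates in the James-type space --- may be feasible in principle, but as written it is a programme rather than a proof: the proximal Goebel--Karlovitz lemma you invoke is neither stated precisely nor proved, and the ``contradiction'' from the weak-limit norm estimates is only described in outline. For this particular $(A,B)$ all of that machinery is unnecessary because of the existence of a point in $B$ equidistant from the whole of $A$ at the minimal distance; this is the key idea you are missing.
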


\begin{proof} Consider the point $2e_1 \in B$. Notice that $d(2e_1,A)=\delta(2e_1,A)=1$. Thus,
\[1\leq d(T(2e_1),T^2(2e_1))\leq d(2e_1,T(2e_1))=1\]
and the result follows.
\end{proof}

\begin{remark} Notice that the previous result also holds whenever we have a cyclic relatively nonexpansive mapping defined on a pair $(A,K)\subseteq (A,B)$ with $2e_1 \in K$.
\end{remark}

\subsection*{Acknowledgements}

Aurora Fern\' andez-Le\' on was partially supported by the Plan Andaluz de Investigaci\' on de la Junta de Andalucía FQM-127 and Grant P08-FQM-03543, and by MEC Grant MTM2009-10696-C02-01. Part of this work was carried out while she was visiting the Babe\c s-Bolyai University in Cluj-Napoca. She acknowledges the kind hospitality of the Department of Mathematics.

Adriana Nicolae was supported by a grant of the Romanian National Authority for Scientific Research, CNCS-UEFISCDI, project number PN-II-ID-PCE-2011-3-0383.

\end{document}